\newcommand*{\QEDB}{\hfill\ensuremath{\square}}%
\newcommand{\prob}{\mathbb{P}}
\newcommand{\R}{\mathbb{R}}
\newcommand{\N}{\mathbb{N}}
\newcommand{\bbS}{\mathbb{S}}
\newcommand{\mc}[1]{\mathcal{#1}}
\newcommand{\col}{\mathrm{col}}
\newcommand{\bs}[1]{\boldsymbol{#1}}
\newcommand{\continuanceref}{}
\newtheorem{theorem}{Theorem}
\newtheorem{definition}{Definition}
\newtheorem{proposition}{Proposition}
\newtheorem{lemma}{Lemma}
\newtheorem{remark}{Remark}
\newtheorem{assumption}{Assumption}
\newtheorem{standing}{Standing Assumption}
\newacronym{}{}{}
\newacronym{NEP}{NEP}{Nash equilibrium problem}
\newacronym{GNEP}{GNEP}{generalized Nash equilibrium problem}
\newacronym{iid}{i.i.d.\@}{independent and identically distributed}
\newacronym{wrt}{w.r.t.\@}{with respect to}
\newacronym{psd}{psd}{positive semidefinite}
\newacronym{ls}{LS}{least squares}
\newacronym{gp}{GP}{Gaussian process}
\newglossaryentry{VI}
{
	name={VI},
	description={variational inequality},
	first={\glsentrydesc{VI} (\glsentrytext{VI})},
	plural={VIs},
	descriptionplural={variational inequalities},
	firstplural={\glsentrydescplural{VI} (VIs)}
}
\newglossaryentry{v-GNE}
{
	name={v-GNE},
	description={variational generalized Nash equilibrium},
	first={\glsentrydesc{v-GNE} (\glsentrytext{v-GNE})},
	plural={v-GNE},
	descriptionplural={variational generalized Nash equilibria},
	firstplural={\glsentrydescplural{v-GNE} (\glsentryplural{v-GNE})}
}
\newglossaryentry{GNE}
{
	name={GNE},
	description={generalized Nash equilibrium},
	first={\glsentrydesc{GNE} (\glsentrytext{GNE})},
	plural={GNE},
	descriptionplural={generalized Nash equilibria},
	firstplural={\glsentrydescplural{GNE} (\glsentryplural{GNE})}
}
\title{\LARGE \bf
Learning equilibria with personalized incentives\\ in a class of nonmonotone games
}
\author{Filippo Fabiani, Andrea Simonetto and Paul J. Goulart 
	\thanks{F. Fabiani and P. J. Goulart are with the Department of Engineering Science, University of Oxford, OX1 3PJ, United Kingdom {\tt \footnotesize (\{filippo.fabiani, paul.goulart\}@eng.ox.ac.uk)}. A. Simonetto is with the UMA, {ENSTA Paris}, Institut Polytechnique de Paris, 91120 Palaiseau, France {\tt \footnotesize (andrea.simonetto@ensta-paris.fr)}. This work was partially supported through the Government’s modern industrial strategy by Innovate UK, part of UK Research and Innovation, under Project LEO (Ref. 104781).}%
}
\begin{document}

\maketitle
\thispagestyle{empty}
\pagestyle{empty}

\begin{abstract}
	We consider quadratic, nonmonotone generalized Nash equilibrium problems with symmetric interactions among the agents. Albeit this class of games is known to admit a potential function, its formal expression can be unavailable in several real-world applications. For this reason, we propose a two-layer Nash equilibrium seeking scheme in which a central coordinator exploits noisy feedback from the agents to design personalized incentives for them. By making use of those incentives, the agents compute a solution to an extended game, and then return feedback measures to the coordinator. We show that our algorithm returns an equilibrium if the coordinator is endowed with standard learning policies, and corroborate our results on a numerical instance of a hypomonotone game. 
\end{abstract}

\section{Introduction}
Several multi-agent applications are characterized by symmetric interactions, in terms of cost incurred by each entity for a given service, across each pair of agents modelled within a noncooperative game-theoretic framework. Under some fairness condition of the electricity market in smart grids and demand-side management \cite{zhu2011multi,cenedese2019charging}, for instance, the cost per energy unit varies in the same way for all the assets, according to the current demand of the market. Likewise, in traffic or congestion games agents usually experience costs that only depend on the number of users occupying shared resources \cite{neel2002game,altman2007evolutionary}. Recently, symmetric barrier functions are also used to enforce proximity-based constraints among agents in several coordination and formation control applications \cite{zhang2010cooperative,fabiani2018distributed}.

In this paper, we consider \glspl{GNEP}, a multi-agent modelling paradigm in which selfish decision-makers compete for shared resources, characterized by symmetric interactions among the agents.
The numerous benefit brought by such a symmetric structure to the \gls{GNEP} are indeed well-known \cite{la2016potential}. In particular, the presence of symmetries implies the existence of a \emph{potential function}, which is key for two main reasons: i) it implicitly guarantees the existence of a \gls{GNE} for the \gls{GNEP} at hand, which represents a desirable outcome of the game \cite{facchinei2007generalized}, and ii) in some cases it can be exploited directly in the design of Nash equilibrium seeking algorithms with convergence guarantees. This is a crucial feature in nonconvex/nonmonotone setting \cite{heikkinen2006potential,fabiani2019multi,cenedese2019charging}.

A mathematical expression of the underlying potential function is usually available only when the physics characterizing the \gls{GNEP} at hand is known, or the agents' cost functions are suitably designed to result in a potential game \cite{li2013designing}. In a general scenario, however, retrieving an expression of the potential function is a hard task \cite[Ch.~2]{la2016potential}, or in some cases it may simply be unknown due to, e.g., privacy reasons.

Thus, in the simplified setting of a quadratic, nonmonotone (specifically, hypomonotone \cite{gadjov2021exact}) \gls{GNEP} enjoying the existence of a potential function (\S \ref{sec:prob_def}), which is however assumed to be unavailable, we design a two-layer scheme that allows the agents to compute a \gls{GNE}. Specifically, in the outer loop we endow a central coordinator with an online learning procedure, e.g., \gls{ls} or \gls{gp}, aiming at iteratively exploiting the noisy agents' feedback to learn some of their private characteristic, namely the (pseudo-)gradient mappings obtained from the agents' cost functions. In the spirit of \cite{simonetto2019personalized,ospina2020personalized,notarnicola2020distributed}, such a reconstructed information is hence exploited to iteratively design \emph{personalized incentives} for the agents, which on their hand make use of those incentives to compute a \gls{GNE} in the inner loop, and finally return noisy feedback measures to the central coordinator. 	

In the proposed setting we leverage the symmetries characterizing the interactions among agents to design parametric personalized incentives (\S \ref{sec:algorithm}) that play a key role to prove the convergence of the algorithm. In particular, i) they serve as regularization terms for the cost functions of the agents, thus enabling them for the practical computation of a \gls{v-GNE} in the inner loop; and ii) they allow to achieve faster convergence rates through an appropriate tuning of few key parameters. 
As main results, we prove that the proposed two-layer iterative scheme converges to a \gls{GNE} by exploiting the consistency bounds typical of available learning procedures for the coordinator, as for example \gls{ls} or \gls{gp} (\S \ref{sec:stat_case}). 
We remark here that \gls{GNE} seeking in nonmonotone \gls{GNEP} is, in general, a hard task. Available solution algorithms are indeed either tailored for \glspl{NEP} with no coupling constraints, or involve generic \glspl{VI}, i.e., possibly not suitable for distributed computation \cite{konnov2006regularization,yin2011nash,konnov2014penalty,lucidi2020solving}. We conclude the paper by testing our findings on a numerical instance of the considered hypomonotone \gls{GNEP} (\S \ref{sec:num_sim}).

\subsubsection*{Notation}
$\bbS^{n}$ is the space of $n \times n$ symmetric matrices. For vectors $v_1,\dots,v_N\in\mathbb{R}^n$ and $\mc I=\{1,\dots,N \}$, we denote $\bs{v} \coloneqq (v_1 ^\top,\dots ,v_N^\top )^\top = \mathrm{col}((v_i)_{i\in\mc I})$ and $ \bs{v}_{-i} \coloneqq \col(( v_j )_{j\in\mc I\setminus \{i\}})$. With a slight abuse of notation, we also use $\bs{v} = (v_i,\bs{v}_{-i})$.
The mapping $F:\R^n \to \R^n$ is monotone on $\mc{X} \subseteq \R^n$ if $(F(x) - F(y))^\top(x - y) \, \geq  0$ for all $x, y \in \mc{X}$; strongly monotone if there exists a constant $c > 0$ such that $(F(x) - F(y))^\top(x - y) \geq c \|x - y\|^2$ for all $x, y \in \mc{X}$; hypomonotone if there exists a constant $c \geq 0$ such that $(F(x) - F(y))^\top(x - y) \geq -c \|x - y\|^2$ for all $x, y \in \mc{X}$.

\section{Mathematical problem setup}
\label{sec:prob_def}
We study a noncooperative game $\Gamma \coloneqq (\mc{I}, (\mc{X}_i)_{i \in \mc{I}}, (J_i)_{i \in \mc{I}})$ with $N$ agents, indexed by $\mc{I} \coloneqq \{1, \ldots, N\}$. Each agent $i \in \mc{I}$ controls variable $x_i$, constrained to a local set $\mc{X}_i \subseteq \R^{n_i}$, and aims at solving the following optimization problem:
\begin{equation}\label{eq:single_prob}
	\forall i \in \mc{I} : 
	\left\{
	\begin{aligned}
		&\underset{x_i \in \mc{X}_i}{\textrm{min}} & & J_i(x_i, \bs{x}_{-i}) \\
		&\hspace{.1cm}\textrm{ s.t. } & & A_i x_i + \sum_{j \in \mc{I} \setminus \{i\}} A_j x_j  \leq b,
	\end{aligned}	
	\right.
\end{equation}
where, for computational purposes, each function $J_i : \R^n \to \R$, $n \coloneqq \sum_{i \in \mc{I}} n_i$, has a quadratic (possibly aggregative) form 
$$
	J_i(x_i, \bs{x}_{-i}) \coloneqq \frac{1}{2} x_i^\top Q_i x_i + \left(\sum_{j\in\mc{I}\setminus\{i\}} C_{i,j} x_j + q_i \right)^\top x_i,
$$
for some $Q_i \in \bbS^{n_i}$, $C_{i,j} \in \R^{n_i \times n_j}$, and $q_i \in \R^{n_i}$. Thus, the collection of optimization problems in \eqref{eq:single_prob} amounts to a (quadratic) \gls{GNEP}, where $A_i \in \R^{m \times n_i}$ constraints each agent to share common resources $b \in \R^m$ with the other agents. Let us define the sets $\mc{X} \coloneqq \prod_{i \in \mc{I}} \mc{X}_i$, where each $\mc{X}_i$ is assumed to be nonempty, compact and convex, $\mc{X}_i(\bs{x}_{-i}) \coloneqq \{x_i \in \mc{X}_i \mid A_i x_i \leq b - \textstyle\sum_{j \in \mc{I} \setminus \{i\}} A_j x_j\}$, and the feasible set of $\Gamma$, $\Omega \coloneqq \{\bs{x} \in \mc{X} \mid A \bs{x} \leq b\}$, with $A \coloneqq [A_1 \ \cdots \ A_N] \in \R^{m \times n}$. We are interested in the following popular solution to a~\gls{GNEP}.

\begin{definition}\textup{(Generalized Nash equilibrium \cite{facchinei2007generalized})}\label{def:GNE}
	A strategy vector $\bs{x}^{\star} \in \Omega$ is a \gls{GNE} of the game $\Gamma$ if, for all $i \in \mc{I}$,
	$$
	J_i(x^{\star}_i, \bs{x}^{\star}_{-i}) \leq \underset{y_i \in \mc{X}_i(\bs{x}^{\star}_{-i})}{\textrm{inf}} \ J_i(y_i, \bs{x}^{\star}_{-i}).
	$$
	\QEDB
\end{definition}

Thus, $\bs{x}^{\star} \in \Omega$ is an equilibrium if no agent can decrease their cost by changing unilaterally $\bs{x}^{\star}_{-i}$ to any other feasible point. In the remainder, we make the following assumption.

\begin{standing}\label{standing:symmetry}
	For all $(i,j) \in \mc{I}^2$, $C_{i,j} = C_{j,i}$.~\QEDB
\end{standing}

Under the condition of symmetric interactions among agents, we know that the game mapping associated to $\Gamma$, $G: \R^n \to \R^n$, also known as the pseudo-gradient mapping since it is formally defined as $G(\bs{x}) \coloneqq \col((\nabla_{x_i} J_i(x_i, \bs{x}_{-i}))_{i \in \mc{I}})$, admits a differentiable, yet possibly \emph{unknown}, function $\theta : \R^n \to \R$ such that $G(\bs{x}) = \nabla \theta(\bs{x})$, for all $\bs{x} \in \Omega$ \cite[Th.~1.3.1]{facchinei2007finite}. This latter coincides with a potential function for the game $\Gamma$ \cite{facchinei2011decomposition}, for which we define the set of constrained minimizers $\Theta \coloneqq \textrm{argmin}_{\bs{y} \in \Omega} \, \theta(\bs{y})$, assumed to be nonempty, and set of constrained stationary points $\Theta^{\textrm{s}}$. Clearly, $\Theta \subseteq \Theta^{\textrm{s}}$. Note that, according to Definition~\ref{def:GNE}, any $\bs{x}^\star \in \Theta$ coincides with a \gls{GNE} for $\Gamma$, namely $\Theta \neq \emptyset$ guarantees the existence of at least a \gls{GNE} of the underlying game.
By considering quadratic costs in \eqref{eq:single_prob}, $G(\cdot)$ turns into an affine mapping:
$$
G(\bs{x}) =  \begin{bmatrix}
	Q_1 & \cdots & C_{1,N}\\
	\vdots & \ddots & \vdots\\
	C_{N,1} & \cdots & Q_N
\end{bmatrix} \bs{x} + \begin{bmatrix}
	q_1\\
	\vdots\\
	q_N
\end{bmatrix}  \eqqcolon Q \bs{x} +  q.
$$

We now introduce the function $\theta : \R^n \to \R$, which can be characterized as stated immediately below:
\begin{equation}\label{eq:pot_fun_hypo}
	\theta(\bs{x}) \coloneqq \sum_{i \in \mc{I}} \left( \frac{1}{2} x_i^\top Q_i x_i + q_i^\top x_i  + \textstyle\sum\limits_{j \in \mc{I}, j < i} (C_{i,j} x_j)^\top x_i \right).
\end{equation}
\begin{proposition}\label{prop:dis_fun_prop}
	For the function $\theta(\bs{x})$ in \eqref{eq:pot_fun_hypo} we have that:
	\begin{enumerate}
		\item[i)] It amounts to a potential function for the \gls{GNEP} $\Gamma$;
		\item[ii)] It is $\ell$-weakly convex, with $\ell \coloneqq |\lambda_{\textrm{min}}(Q)|$.
	\end{enumerate}
	\QEDB
\end{proposition}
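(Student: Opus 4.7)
The plan is to cast $\theta$ in compact quadratic form and then read off both items from elementary matrix algebra. The core preliminary step is to show
\[
\theta(\bx) = \tfrac{1}{2}\bx^\top Q \bx + q^\top \bx.
\]
Expanding the right-hand side blockwise produces a diagonal contribution $\tfrac{1}{2}\sum_i x_i^\top Q_i x_i$ and an off-diagonal contribution $\tfrac{1}{2}\sum_{i\neq j} x_i^\top C_{i,j} x_j$. Splitting the latter sum into the $j<i$ and $j>i$ halves, relabelling indices in the second half, and invoking Standing Assumption~\ref{standing:symmetry}, the two halves coalesce into a single $\sum_i\sum_{j<i} x_i^\top C_{i,j} x_j$, which together with the linear term reproduces exactly \eqref{eq:pot_fun_hypo}. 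As a by-product, the same symmetry forces $Q\in\bbS^n$.

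For item (i), differentiating the compact expression gives $\nabla\theta(\bx) = Q\bx + q = G(\bx)$. Since $G$ is by definition $\col((\nabla_{x_i} J_i(x_i,\bs{x}_{-i}))_{i\in\mc I})$, this yields the componentwise identity $\nabla_{x_i}\theta(\bx)=\nabla_{x_i}J_i(x_i,\bs{x}_{-i})$ for every $i\in\mc I$, which is the defining property of a (Monderer--Shapley) potential function for $\Gamma$ invoked immediately after \eqref{eq:pot_fun_hypo}. Alternatively, one can verify the same identity directly from \eqref{eq:pot_fun_hypo} by splitting the outer index into $k=i$, $k<i$ and $k>i$ and using $C_{i,j}=C_{j,i}$ to recombine the cross terms.

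For item (ii), with $Q$ symmetric I would write
\[
\theta(\bx) + \tfrac{\ell}{2}\|\bx\|^2 = \tfrac{1}{2}\bx^\top(Q+\ell I)\bx + q^\top\bx,
\]
which is convex iff $Q+\ell I\succeq 0$, i.e.\ iff $\ell \ge -\lambda_{\min}(Q)$. Choosing $\ell = |\lambda_{\min}(Q)|$ meets this bound (tightly when $\lambda_{\min}(Q)\le 0$), so $\theta$ is $\ell$-weakly convex in the sense recalled in \S\ref{sec:prob_def}. The only delicate step in the whole argument is the symmetry-driven coalescence of the bilinear terms that underlies the compact rewrite; once that is in hand, both items follow from standard linear algebra.
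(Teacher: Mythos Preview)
Your argument is correct. The compact rewrite $\theta(\bx)=\tfrac12\bx^\top Q\bx+q^\top\bx$ (with $Q\in\bbS^n$ by Standing Assumption~\ref{standing:symmetry}) is exactly the right device, and both items follow from it as you describe.

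The route, however, differs from the paper's. For item (i) the paper simply omits the computation and cites an earlier work; you instead supply the full verification via $\nabla\theta=G$, which is a strict improvement in self-containedness. For item (ii) the paper argues through the \emph{gradient}: it computes $(\nabla\theta(\bx)-\nabla\theta(\bs y))^\top(\bx-\bs y)=\|\bx-\bs y\|_Q^2\ge\lambda_{\min}(Q)\|\bx-\bs y\|^2$, concludes that $\nabla\theta$ is $|\lambda_{\min}(Q)|$-hypomonotone, and then invokes an external equivalence lemma (hypomonotonicity of the gradient $\Leftrightarrow$ weak convexity of the function) to finish. You instead work at the level of the \emph{function}, checking directly that the augmented Hessian $Q+\ell I$ is positive semidefinite. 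Both proofs hinge on the same eigenvalue bound, but yours is more elementary and avoids the external citation; the paper's version, on the other hand, makes the hypomonotonicity of $G$ explicit, which is the property actually exploited later in Proposition~\ref{prop:strong_conv}.
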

\begin{proof}
	i) This part of the proof is akin to \cite[Prop.~2]{fabiani2019nash} and therefore, due to space limitations, it is here omitted. 
	
	ii) We start from the definition of hypomonotonicity for the mapping $\nabla \theta(\cdot)$, which requires the existence of some $\rho > 0$ such that $(\nabla \theta(\bs{x}) - \nabla \theta(\bs{y}))^\top (\bs{x} - \bs{y}) \geq -\rho \|\bs{x} - \bs{y}\|^2$, for all $\bs{x}$, $\bs{y} \in \Omega$. Since $\nabla \theta(\bs{x}) = G(\bs{x}) = Q \bs{x} + q $,
	$$
	\begin{aligned}
		(\nabla \theta(\bs{x}) - \nabla \theta(\bs{y}))^\top &(\bs{x} - \bs{y}) = (Q (\bs{x} - \bs{y}))^\top (\bs{x} - \bs{y})\\
		& = \|\bs{x} - \bs{y}\|^2_Q \geq \lambda_{\textrm{min}}(Q) \|\bs{x} - \bs{y}\|^2.
	\end{aligned}
	$$
	With $\lambda_{\textrm{min}}(Q)$ that, in principle, may be nonpositive (since each $Q_i$ is only symmetric), we obtain $(\nabla \theta(\bs{x}) - \nabla \theta(\bs{y}))^\top (\bs{x} - \bs{y}) \geq -|\lambda_{\textrm{min}}(Q)| \|\bs{x} - \bs{y}\|^2$. In view of \cite[Lemma~2.1]{davis2019stochastic}, the $|\lambda_{\textrm{min}}(Q)|$-hypomonotonicity of the mapping $\nabla \theta(\cdot)$ is equivalent to the $|\lambda_{\textrm{min}}(Q)|$-weak convexity of the function $\theta(\cdot)$, thus concluding the proof.
\end{proof}

It follows from Proposition~\ref{prop:dis_fun_prop} that the quadratic \gls{GNEP} $\Gamma$ in \eqref{eq:single_prob} turns into a hypomonotone \gls{GNEP} \cite{gadjov2021exact} in which the mapping $Q \bs{x} + q$ may not be monotone, since $Q + Q^\top$ is not positive semi-definite in general. As a consequence, the function $\theta(\cdot)$ may not be convex, thus posing several limitations in the design of an equilibrium seeking procedure for $\Gamma$ with most of the available techniques (monotonicity is, indeed, one of the weakest requirements \cite{facchinei2007finite}). 

On the other hand, we note that Standing Assumption~\ref{standing:symmetry} is key to claim that the underlying quadratic \gls{GNEP} admits a potential function. This latter is key for two main reasons: i) to claim that the game at hand possesses (at least) an equilibrium (in our case, $\Theta \neq \emptyset$), and ii) to design Nash equilibrium seeking algorithms with convergence guarantees, especially in nonconvex/nonmonotone setting, e.g., \cite{heikkinen2006potential,fabiani2019multi,cenedese2019charging}. 
In practical applications, however, may not be always reasonable to rely on a formal expression of the potential function \cite[Ch.~2]{la2016potential} due to privacy reasons for instance. In our  mathematical developments, we hence treat the \gls{GNEP} in \eqref{eq:single_prob} as if we were unaware of the fact that it is potential, and therefore the expression for $\theta(\cdot)$ in \eqref{eq:pot_fun_hypo} can not be directly exploited for algorithm design. By relying on standard learning paradigms, in the next section we design a two-layer procedure to steer the agents on some point falling into the set $\Theta$, which in view of Definition~\ref{def:GNE} coincides with a \gls{GNE} of $\Gamma$.

\section{The two-layer learning procedure}\label{sec:algorithm}
	\begin{algorithm}[!t]
	\caption{Two-layer semi-decentralized method}\label{alg:two_layer}
	\DontPrintSemicolon
	\SetArgSty{}
	\SetKwFor{ForAll}{for all}{do}{end forall}
	\smallskip
	\textbf{Initialization:} Set $t = 0$, $\bs{x}^\star_{t} \in \Omega$, set sequences $(c_t)_{t \in \N} > 2 \ell$ $(\xi_t)_{t \in \N} \in [0, 1/c_t)$, choose some $T \in \N$\\
	\smallskip
	\textbf{Iteration $(t \in \N)$:} \\
	\smallskip
	\begin{itemize}\setlength{\itemindent}{0cm}
		\item[$\bullet$] Integrate recent measures to learn $(\nabla_{x_i} \hat{J}_{i, t - 1}(\bs{x}^\star_{t -1}))_{i \in \mathcal{I}}$\\
		\smallskip
		\item[$\bullet$] Compute personalized incentives $(u_{i,t}(\bs{x}))_{i \in \mc{I}}$\\
		\smallskip
		\item[$\circ$] Compute a \gls{v-GNE} of the extended game $\overline{\Gamma}$, $\bs{x}^\star_t \in \Omega$\\
		\smallskip
		\item[$\bullet$] Obtain noisy agents' feedback $\{(x^\star_{i,t},p_{i,t})\}_{i \in \mc{I}}$
	\end{itemize}
\end{algorithm}

From the discussion in \S \ref{sec:prob_def}, we identify two critical challenges in designing a Nash equilibrium seeking method to compute a \gls{GNE} for $\Gamma$: 1) the non-monotonicity of $Q \bs{x} + q$, and 2) the unknown expression of the potential function $\theta$ in \eqref{eq:pot_fun_hypo}. We propose a way to circumvent both issues by designing \emph{personalized feedback functionals} $u_i : \R^{n} \to \R$ in the spirit of \cite{simonetto2019personalized}, which can used as ``control actions'' as described~next.

\subsection{The algorithm} 
 We summarize the key steps of the proposed, two-layer approach in Algorithm~\ref{alg:two_layer}. In particular, black-filled bullets refer to the tasks that have to be performed by a central coordinator, while the empty bullet to the one performed by the agents in $\mc{I}$. After the initialization phase, the coordinator aims at learning online (i.e., while the algorithm is running) the gradient mapping of the unknown function $\theta$ by leveraging possibly noisy agents' feedback on the private functions $J_i$'s in the outer loop. Then, on the basis of the estimated $\hat{J}_{i,t}$, the central entity designs personalized incentive functionals $u_{i,t}$, thus forcing the agents to face with an extended version of the quadratic \gls{GNEP} $\Gamma$ in \eqref{eq:single_prob}, i.e., $\overline{\Gamma} \coloneqq (\mc{I}, (\mc{X}_i)_{i \in \mc{I}}, (\bar{J}_{i,t})_{i \in \mc{I}})$, which stem from replacing each cost function $J_i(x_i,  \bs{x}_{-i})$ in \eqref{eq:single_prob} with $J_i(x_i,  \bs{x}_{-i}) + u_{i,t}(x_i,  \bs{x}_{-i}) \eqqcolon \bar{J}_{i,t}(x_i,  \bs{x}_{-i})$. 

By suitably choosing such incentives, we will show that they serve as regularization terms for $J_i(\cdot,  \bs{x}_{-i})$, as well as they trade-off convergence properties of Algorithm~\ref{alg:two_layer} and robustness to the imperfect knowledge of the potential function $\theta$ and associated gradient. Specifically, personalized incentives enable the agents for the practical computation of an equilibrium of the extended game $\overline{\Gamma}$ via standard solution procedures for \glspl{GNEP} \cite{salehisadaghiani2016distributed,ye2017distributed,gadjov2021exact}. In the cognate literature, however, algorithmic methods typically return a \gls{v-GNE} \cite{facchinei2007generalized}, which coincides to any solution to the (extended) \gls{GNEP} $\overline{\Gamma}$ that is also a solution to the associated \gls{VI}, i.e., any collective vector of strategies $\bs{x}_{t}^\star \in \Omega$ such that, for all $t \in \N$,	
\begin{equation}\label{eq:VI}
	(\bs{y} - \bs{x}_{t}^\star)^\top \bar{G}_{t}(\bs{x}_{t}^\star) \geq 0, \, \text{ for all } \bs{y} \in \Omega,
\end{equation}	
where the mapping $\bar{G}_{t} : \R^n \to \R^n$ is formally obtained as $\bar{G}_t(\bs{x}) \coloneqq \col((\nabla_{x_i} \bar{J}_{i,t}(x_i, \bs{x}_{-i}))_{i \in \mc{I}}) = Q \bs{x} + q + U_t(\bs{x})$, and $U_t(\bs{x}) \coloneqq \col((\nabla_{x_i} u_{i,t}(x_i, \bs{x}_{-i}))_{i \in \mc{I}})$. 
This is why the computational step in Algorithm~\ref{alg:two_layer} involving the agents requires them to implement an available procedure to compute a \gls{v-GNE} of the extended game $\overline{\Gamma}$.
As a last step, the proposed method requires the coordinator to retrieve feedback measures and equilibrium strategies from the agents, $\{(x^\star_{i,t},p_{i,t})\}_{i \in \mc{I}}$, with $p_{i,t} \coloneqq J_i(\bs{x}^\star_t) + \varepsilon_{i, t}$ and random variable $\varepsilon_i$.

\subsection{Personalized incentives design}		
We follow the approach in \cite[Alg.~1]{ospina2020personalized} by endowing the central coordinator with a \emph{learning procedure} $\mathscr{L}$ such that, at every outer iteration $t \in \N$ of Algorithm~\ref{alg:two_layer}, it integrates the most recent noisy agents' feedback $\{p_{i,t-1}\}_{i \in \mc{I}}$ to calculate an estimate of the gradients $(\nabla_{x_i} \hat{J}_{i, t-1}(\bs{x}^\star_{t-1}))_{i \in \mc{I}}$ (since $G(\bs{x}) = \nabla \theta(\bs{x})$ \cite[Th.~1.3.1]{facchinei2007finite}). 
Then, it exploits such estimates to design personalized functionals as follows:
\begin{equation}\label{eq:pers_feedback}
	u_{i,t}(\bs{x}) = \tfrac{1}{2} c_t \|x_i - x^+_{i,t}\|^2, \text{ for all } i \in \mc{I},
\end{equation}	
where $x^+_{i,t} \coloneqq x^\star_{i, t-1} + \xi_t \nabla_{x_i} \hat{J}_{i, t-1}(\bs{x}^\star_{t-1})$, for some $c_t$, $\xi_t  \geq 0$, for all $t \in \N$. 
Given the parametric form in \eqref{eq:pers_feedback}, our goal hence reduces to design suitable conditions for the gain $c_t$ and step-size $\xi_t$ in such a way that the sequence of \gls{v-GNE}, $(\bs{x}^\star_t)_{t \in \N}$, monotonically decreases (or non-increases) the unknown potential function $\theta(\cdot)$, and asymptotically converges to one of its constrained minimizers (either local or global). 

The structure of the proposed personalized incentives brings us to the following considerations: i) inspired by recent algorithms based on the Heavy Anchor method, e.g.,  \cite{gadjov2021exact}, we note that each term $x^+_{i,t}$ requires a positive sign for the gradient step $\xi_t \nabla_{x_i} \hat{J}_{i, t-1}(\bs{x}^\star_{t-1})$. In the next sections we will show that such a choice enables us to boost the convergence of Algorithm~\ref{alg:two_layer}, or to reduce the asymptotic error by means of a suitable tuning of $\xi_t$. In addition, ii) we note that $c_t$ is crucial to allow the agents for the practical computation of a \gls{v-GNE} through available iterative schemes, as stated next:
\begin{proposition}\label{prop:strong_conv}
	Let $c_t \geq 2 \ell$ for all $t \in \N$. Then, with the personalized incentives in \eqref{eq:pers_feedback}, the mapping $\bar{G}_t(\cdot)$ is $\ell$-strongly monotone, for all $t \in \N$.
	\QEDB
\end{proposition}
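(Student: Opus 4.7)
The plan is to compute $\bar{G}_t$ explicitly, reduce the strong monotonicity condition to a spectral bound on a symmetric matrix, and close the gap using the standing symmetry assumption together with the hypothesis $c_t \geq 2\ell$.

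First I would differentiate the personalized incentive in \eqref{eq:pers_feedback}. Since $x^+_{i,t}$ is built from data available at iteration $t-1$, it is a constant with respect to $\bs{x}$, and $\nabla_{x_i} u_{i,t}(\bs{x}) = c_t(x_i - x^+_{i,t})$. Stacking over $i \in \mc{I}$ yields $U_t(\bs{x}) = c_t \bs{x} - c_t \bs{x}^+_t$, so that
\[
\bar{G}_t(\bs{x}) = (Q + c_t I)\bs{x} + (q - c_t \bs{x}^+_t),
\]
and therefore $\bar{G}_t(\bs{x}) - \bar{G}_t(\bs{y}) = (Q + c_t I)(\bs{x}-\bs{y})$ for all $\bs{x}, \bs{y}$.

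Next I would evaluate the monotonicity form. Using Standing Assumption~\ref{standing:symmetry} (and $Q_i \in \bbS^{n_i}$), the matrix $Q$ is symmetric, so the quadratic form is simply bounded by its smallest eigenvalue:
\[
(\bs{x}-\bs{y})^\top (Q + c_t I)(\bs{x}-\bs{y}) \geq \bigl(\lambda_{\min}(Q) + c_t\bigr)\,\|\bs{x}-\bs{y}\|^2.
\]
By Proposition~\ref{prop:dis_fun_prop}, $\ell = |\lambda_{\min}(Q)|$, hence $\lambda_{\min}(Q) \geq -\ell$. Combining this with $c_t \geq 2\ell$ gives $\lambda_{\min}(Q) + c_t \geq \ell$, and therefore $(\bar{G}_t(\bs{x}) - \bar{G}_t(\bs{y}))^\top (\bs{x}-\bs{y}) \geq \ell\,\|\bs{x}-\bs{y}\|^2$, which is exactly $\ell$-strong monotonicity.

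There is no real obstacle here; the only point worth flagging is that the symmetry of $Q$ (required to pass from the quadratic form to $\lambda_{\min}(Q)$ directly, rather than $\lambda_{\min}((Q+Q^\top)/2)$) is inherited from the standing assumption, exactly as in the proof of Proposition~\ref{prop:dis_fun_prop}(ii). Once that is noted, the choice $c_t \geq 2\ell$ gives the tightest threshold that simultaneously cancels the hypomonotone contribution $-\ell$ and leaves an $\ell$ margin, producing the stated strong monotonicity constant.
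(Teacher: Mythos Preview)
Your proof is correct and follows essentially the same route as the paper: both compute $\bar{G}_t(\bs{x}) - \bar{G}_t(\bs{y}) = (Q + c_t I)(\bs{x}-\bs{y})$ and then lower bound the quadratic form by $\ell\|\bs{x}-\bs{y}\|^2$ using $c_t \geq 2\ell$ and $\ell = |\lambda_{\min}(Q)|$. The only cosmetic difference is that the paper packages the final step via the auxiliary function $\psi_t(\bs{x}) = \theta(\bs{x}) + \tfrac{c_t}{2}\|\bs{x}\|^2$ and its $\ell$-strong convexity, whereas you invoke the spectral bound on $Q + c_t I$ directly.
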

\begin{proof}
	By using the definition of incentive functionals in \eqref{eq:pers_feedback},  we obtain $\bar{G}_t(\bs{x}) = G(\bs{x}) + c_t (\bs{x} - \bs{x}^\star_{t-1} - \xi_t \hat{G}_{t-1}(\bs{x}^\star_{t-1}))$. Thus, for any $\bs{x}$, $\bs{y} \in \Omega$, and $t \in \N$, we have that:
	$$
	\begin{aligned}
		(\bs{x} &- \bs{y})^\top (\bar{G}_t(\bs{x}) - \bar{G}_t(\bs{y}))\\
		& = (\bs{x} \!-\! \bs{y})^\top (G(\bs{x}) \!+\! c_t (\bs{x} \!-\! \bs{x}^\star_{t-1} \!-\! \xi_t\hat{G}_{t-1}(\bs{x}^\star_{t-1})) \\
		& \hspace{2.5cm} \!-\! G(\bs{y}) \!-\! c_t (\bs{y} \!-\! \bs{x}^\star_{t-1} \!-\! \xi_t\hat{G}_{t-1}(\bs{x}^\star_{t-1})))\\
		& = (\bs{x} \!-\! \bs{y})^\top (G(\bs{x}) \!+\! c_t \bs{x} \!-\! G(\bs{y}) \!-\! c_t \bs{y})\\
		& = (\bs{x} \!-\! \bs{y})^\top (\nabla\theta(\bs{x}) \!+\! c_t \bs{x} \!-\! \nabla\theta(\bs{y}) \!-\! c_t \bs{y}),
	\end{aligned}
	$$
	where the last equality follows from Standing Assumption~\ref{standing:symmetry} and \cite[Th.~1.3.1]{facchinei2007finite}. Let us now consider the auxiliary function $\psi_t(\bs{x}) \coloneqq \theta(\bs{x}) + \tfrac{c_t}{2} \| \bs{x} \|^2$. In case $c_t \geq 2 \ell$ for all $t \in \N$, $\psi_t(\cdot)$ turns out to be $\ell$-strongly convex in view of the $\ell$-weak convexity of $\theta$ (Proposition~\ref{prop:dis_fun_prop}), which directly leads to
	$$
	\begin{aligned}
		&(\bs{x} - \bs{y})^\top (\bar{G}_t(\bs{x}) - \bar{G}_t(\bs{y}))\\ 
		&\hspace{1.8cm} =  (\bs{x} - \bs{y})^\top (\nabla\theta(\bs{x}) + c_t \bs{x} - \nabla\theta(\bs{y}) - c_t \bs{y})\\
		&\hspace{1.8cm} = (\bs{x} - \bs{y})^\top (\nabla\psi_t(\bs{x}) - \nabla\psi_t(\bs{y})) \geq \ell \|\bs{x} - \bs{y}\|^2,
	\end{aligned}
	$$
	i.e., the definition of strongly monotone mapping for $\bar{G}_t$.
\end{proof}

It follows that, if $c_t$ is large enough for all $t \in \N$, then at every outer iteration the agents compute the unique  \cite[Th.~2.3.3]{facchinei2007finite} \gls{v-GNE} associated to the extended \gls{GNEP} $\overline{\Gamma}$.

\section{Main results}\label{sec:stat_case}	
We now establish convergence results for Algorithm~\ref{alg:two_layer} by distinguishing between two cases: online perfect reconstruction, if the learning procedure $\mathscr{L}$ allows the central coordinator to leverage $\nabla_{x_i} \hat{J}_{i, {t-1}}(\bs{x}^\star_{t-1}) = \nabla_{x_i} J_{i}(\bs{x}^\star_{t-1})$, for all $i \in \mc{I}$ and $t \in \N$, and imperfect reconstruction. 
To this end, we introduce the fixed point residual $\Delta^\star_t \coloneqq \bs{x}^\star_{t} - \bs{x}^\star_{t-1}$ as a key quantity to ``measure'' the distance between the strategy profile at the current iteration and the points in $\Theta^{\textrm{s}}$.
\begin{lemma}\label{lemma:stationary}
	Let $(\bs{x}^\star_t)_{t \in \N}$ be the sequence of \gls{v-GNE} generated by Algorithm~\ref{alg:two_layer} with $c_t \xi_t < 1$. Moreover, assume perfect reconstruction of the mapping $Q \bs{x} + q$, and that there exists some $\bs{x}^\star_t \in \Omega$ such that $\|\Delta_t^\star\| = 0$. Then, some $\bs{x}^\star_t \in \Omega$, i.e., $\bs{x}^\star_t$ is a stationary point for the function $\theta(\bs{x})$.
	\QEDB
\end{lemma}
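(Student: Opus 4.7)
The plan is to unwind the v-GNE variational inequality for the extended game at the fixed point and show that, when $c_t\xi_t<1$, it reduces to the stationarity VI for $\theta$ on $\Omega$.

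First I would write down what it means for $\bs{x}^\star_t$ to be the v-GNE of $\overline{\Gamma}$: from \eqref{eq:VI},
$(\bs{y}-\bs{x}^\star_t)^\top \bar{G}_t(\bs{x}^\star_t)\geq 0$ for all $\bs{y}\in\Omega$.
Next, I would substitute the explicit form of $\bar{G}_t$ obtained in the proof of Proposition~\ref{prop:strong_conv}, namely $\bar{G}_t(\bs{x})=G(\bs{x})+c_t(\bs{x}-\bs{x}^\star_{t-1}-\xi_t\hat{G}_{t-1}(\bs{x}^\star_{t-1}))$, and invoke the perfect reconstruction hypothesis to replace $\hat{G}_{t-1}$ by $G$.

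The key simplification comes from the hypothesis $\|\Delta_t^\star\|=0$, i.e., $\bs{x}^\star_t=\bs{x}^\star_{t-1}$. Plugging this into the expression above collapses the regularization term and yields
$\bar{G}_t(\bs{x}^\star_t)=G(\bs{x}^\star_t)-c_t\xi_t G(\bs{x}^\star_t)=(1-c_t\xi_t)G(\bs{x}^\star_t)$.
Substituting this back into the v-GNE VI gives $(1-c_t\xi_t)(\bs{y}-\bs{x}^\star_t)^\top G(\bs{x}^\star_t)\geq 0$ for all $\bs{y}\in\Omega$. Here the assumption $c_t\xi_t<1$ is used in an essential way: the scalar $(1-c_t\xi_t)$ is strictly positive, so it can be divided out without flipping the inequality, leaving $(\bs{y}-\bs{x}^\star_t)^\top G(\bs{x}^\star_t)\geq 0$ for all $\bs{y}\in\Omega$.

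Finally, Standing Assumption~\ref{standing:symmetry} together with \cite[Th.~1.3.1]{facchinei2007finite} gives $G=\nabla\theta$ on $\Omega$, so the last inequality is precisely the first-order stationarity condition for the constrained problem $\min_{\bs{y}\in\Omega}\theta(\bs{y})$, i.e., $\bs{x}^\star_t\in\Theta^{\textrm{s}}$. There is no real obstacle here beyond being careful with signs and with the role of $c_t\xi_t<1$; the main conceptual point is that the personalized incentive only vanishes at a fixed point up to a residual pseudo-gradient term, and the stepsize condition ensures that this residual points in the original gradient direction rather than opposite to it.
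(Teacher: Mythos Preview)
Your proposal is correct and follows essentially the same route as the paper: the paper writes the v-GNE condition as a normal-cone inclusion $(Q\bs{x}^\star_t+q)+c_t(\bs{x}^\star_t-\bs{x}^\star_{t-1}-\xi_t(Q\bs{x}^\star_{t-1}+q))+\mathcal{N}_\Omega(\bs{x}^\star_t)\ni\bs{0}$, collapses it under $\bs{x}^\star_t=\bs{x}^\star_{t-1}$ to $(1-c_t\xi_t)(Q\bs{x}^\star_t+q)+\mathcal{N}_\Omega(\bs{x}^\star_t)\ni\bs{0}$, and uses $c_t\xi_t<1$ to identify this with first-order stationarity for $\theta$. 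Your VI formulation is simply the equivalent dual statement of the same inclusion, and the logic is identical.
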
	
\begin{proof}
	Any stationary point of the function $\theta(\bs{x})$ satisfies the following first-order optimality condition: $$w (Q \bs{x} + q) + \mathcal{N}_{\Omega}(\bs{x}) \ni \bs{0},$$ where $w>0$ scales the cost associated to $\theta(\cdot)$, while $ \mathcal{N}_{\Omega}$ denotes the normal cone operator of the feasible set $\Omega$. At every $t \in \N$, the \gls{v-GNE} $\bs{x}^\star_t$ obtained with perfect reconstruction satisfies
	$
	(Q \bs{x}^\star_t + q) + c_t (\bs{x}^\star_t - \bs{x}^\star_{t-1} - \xi_t (Q \bs{x}^\star_{t-1} + q)) + \mathcal{N}_{\Omega}(\bs{x}^\star_t) \ni \bs{0}.
	$
	If $\|\Delta^\star_t\| = 0$ then we have $\bs{x}^\star_t = \bs{x}^\star_{t-1}$, namely $\bs{x}^\star_t$ is the solution to the inclusion
	$$
	(1- c_t\xi_t) (Q \bs{x}^\star_{t} + q) + \mathcal{N}_{\Omega}(\bs{x}^\star_t) \ni \bs{0},
	$$
	thus satisfying the first-order optimality conditions for $\theta(\bs{x})$, and therefore it amounts to one of its stationary points. 
\end{proof}

In case of inexact reconstruction of the gradient mappings, as a metric for the convergence of the sequence $(\bs{x}^\star_t)_{t \in \N}$ to the stationary point set we also adopt the average value of $ \|\Delta_t^\star\|$ over a certain iterations horizon of length $T \in \N$, formally defined as $\frac{1}{T}\sum_{t\in \mc{T}} \|\Delta^\star_t\|$, $\mc{T}\coloneqq\{1, \ldots, T\}$. For the imperfect reconstruction case, our result will hence be of the form  $(1/T)\sum_{t\in \mc{T}} \|\Delta^\star_t\| = O(1)$ if the reconstruction error is persistent, while $(1/T)\sum_{t\in \mc{T}} \|\Delta^\star_t\| = 0$ otherwise.
\begin{remark}
	Since our algorithm produces monononically non-increasing values for $\theta$ through the iterations, it is worth mentioning that the application of traditional perturbation techniques, such as in \cite{escape}, can ensure that the stationary points to which we converge are, in practice, local minima in $\Theta^{\textrm{s}}$, and hence \gls{GNE} of the original game $\Gamma$.
	\QEDB
\end{remark}   

\subsection{Perfect reconstruction of the pseudo-gradients}\label{sec:stat_case_1}

In the desirable, yet unrealistic, case in which the procedure $\mathscr{L}$ enables for the perfect reconstruction, i.e., $\nabla_{x_i} \hat{J}_{i, {t-1}}(\bs{x}^\star_{t-1}) = \nabla_{x_i} J_{i}(\bs{x}^\star_{t-1})$, for all $i \in \mc{I}$ and $t \in \N$, we show that a careful choice of the gain $c_t $ and the step-size $\xi_t $ characterizing the personalized incentives in \eqref{eq:pers_feedback} allows Algorithm~\ref{alg:two_layer} to produce a convergent sequence of \gls{v-GNE}:
\begin{proposition}\label{prop:convergence_perfect}
	For all $t \in \N$, let $c_t \geq 2 \ell$ and $\xi_t \in [0, 1/c_t)$. Then, with the incentives in \eqref{eq:pers_feedback} the sequence of \gls{v-GNE} $(\bs{x}^\star_t)_{t \in \N}$ generated by Algorithm~\ref{alg:two_layer} has limit point in $\Theta^{\textrm{s}}$.
	\QEDB
\end{proposition}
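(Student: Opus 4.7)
The plan is to establish a uniform descent of the potential function $\theta$ along the sequence $(\bs{x}^\star_t)_{t \in \N}$, and then leverage compactness of $\Omega$ together with the stationarity characterization of Lemma~\ref{lemma:stationary} to exhibit a convergent subsequence whose limit lies in $\Theta^{\textrm{s}}$.

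First, I would exploit the VI \eqref{eq:VI} characterizing the v-GNE $\bs{x}^\star_t$ of the extended game $\overline{\Gamma}$, testing it with the feasible point $\bs{y} = \bs{x}^\star_{t-1}$ to get $(\Delta^\star_t)^\top \bar{G}_t(\bs{x}^\star_t) \leq 0$. Under perfect reconstruction, $\bar{G}_t(\bs{x}^\star_t) = G(\bs{x}^\star_t) + c_t\Delta^\star_t - c_t\xi_t G(\bs{x}^\star_{t-1})$, and the affine identity $G(\bs{x}^\star_t) - G(\bs{x}^\star_{t-1}) = Q\Delta^\star_t$ rearranges this as
\[
(1 - c_t\xi_t)\, G(\bs{x}^\star_{t-1})^\top \Delta^\star_t + (\Delta^\star_t)^\top Q \Delta^\star_t + c_t\|\Delta^\star_t\|^2 \leq 0.
\]
I would then match this with the exact quadratic expansion $\theta(\bs{x}^\star_t) - \theta(\bs{x}^\star_{t-1}) = G(\bs{x}^\star_{t-1})^\top \Delta^\star_t + \tfrac{1}{2}(\Delta^\star_t)^\top Q \Delta^\star_t$, eliminate $G(\bs{x}^\star_{t-1})^\top \Delta^\star_t$, and apply Proposition~\ref{prop:dis_fun_prop}(ii) in the form $(\Delta^\star_t)^\top Q \Delta^\star_t \geq -\ell\|\Delta^\star_t\|^2$. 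Combined with $c_t \geq 2\ell$ and $c_t\xi_t < 1$, a short calculation should yield a descent of the type $\theta(\bs{x}^\star_t) - \theta(\bs{x}^\star_{t-1}) \leq -\kappa\|\Delta^\star_t\|^2$ for some $\kappa > 0$ (a quick check suggests $\kappa = \ell$ is admissible, since with $\alpha \coloneqq 1 - c_t\xi_t \in (0,1]$ the ratio $\tfrac{(2-\alpha)\ell}{2\alpha}$ does not exceed $c_t$ in this regime).

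Next, I would sum telescopically over $t$ and use that $\theta$ is continuous on the compact set $\Omega$, hence bounded below, to obtain $\sum_t \|\Delta^\star_t\|^2 < \infty$ and therefore $\|\Delta^\star_t\| \to 0$. By Bolzano--Weierstrass a subsequence $\bs{x}^\star_{t_k} \to \bar{\bs{x}} \in \Omega$ exists, and $\|\Delta^\star_{t_k}\| \to 0$ forces $\bs{x}^\star_{t_k-1} \to \bar{\bs{x}}$ as well. Passing to the limit in the first-order inclusion $(Q\bs{x}^\star_t + q) + c_t\Delta^\star_t - c_t\xi_t(Q\bs{x}^\star_{t-1} + q) \in -\mathcal{N}_\Omega(\bs{x}^\star_t)$, and invoking the same manipulation as in the proof of Lemma~\ref{lemma:stationary} together with closedness of the normal-cone graph, should identify $\bar{\bs{x}}$ with a point satisfying the first-order stationarity condition for $\min_{\bs{x} \in \Omega} \theta(\bs{x})$, hence $\bar{\bs{x}} \in \Theta^{\textrm{s}}$.

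The main obstacle is the limit passage in that inclusion: if $c_t$ is allowed to grow unboundedly, the term $c_t\Delta^\star_t$ need not vanish even when $\|\Delta^\star_t\| \to 0$, and if $c_t\xi_t \to 1$ the positive factor $(1 - c_t\xi_t)$ that one divides out to recover the stationarity condition of $\theta$ degenerates. A clean conclusion therefore presumes that $c_t$ stays bounded and $c_t\xi_t$ is bounded away from $1$; under such mild supplementary controls the reduction to the inclusion appearing in Lemma~\ref{lemma:stationary} goes through and delivers $\bar{\bs{x}} \in \Theta^{\textrm{s}}$.
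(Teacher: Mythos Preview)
Your proposal is correct and follows essentially the same architecture as the paper's proof: test the VI \eqref{eq:VI} at $\bs{y}=\bs{x}^\star_{t-1}$, derive a uniform descent of $\theta$ along $(\bs{x}^\star_t)_{t\in\N}$, use boundedness of $\theta$ on the compact set $\Omega$ to obtain $\|\Delta^\star_t\|\to 0$, and conclude stationarity of limit points via the first-order inclusion underlying Lemma~\ref{lemma:stationary}. The only mechanical differences are that the paper first bounds $\Delta_t^{\star\top}\nabla\theta(\bs{x}^\star_{t-1})$ through the auxiliary $\ell$-strongly convex function $\psi_t(\bs{x})=\theta(\bs{x})+\tfrac{c_t}{2}\|\bs{x}\|^2$ and then applies the descent lemma, whereas you exploit the exact quadratic Taylor expansion of $\theta$ directly---a slightly sharper route here since $\theta$ is quadratic. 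Your explicit subsequence-and-limit passage in the normal-cone inclusion is in fact more careful than the paper's somewhat informal appeal to Lemma~\ref{lemma:stationary} (which, as stated, assumes $\|\Delta^\star_t\|=0$ at some finite $t$), and the caveat you raise about unbounded $c_t$ or $c_t\xi_t\to 1$ is a genuine technical point that the paper leaves implicit.
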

\begin{proof}
	We first show that, if $c_t$ (resp., $\xi_t$) is large (small) enough, the personalized functionals in \eqref{eq:pers_feedback} allow to point a descent direction for the unknown potential function $\theta$, i.e., $\Delta^{\star^\top}_t \nabla \theta(\bs{x}^\star_{t-1})  < 0$.
	Since $\bs{x}^\star_t$ amounts to a \gls{v-GNE} at every outer iteration $t \in \N$, we have by \eqref{eq:VI} that $(\bs{y} - \bs{x}^\star_t)^\top \bar{G}_t(\bs{x}^\star_t) \geq 0$ for all $\bs{y} \in \Omega$. Then, since $\bs{x}^\star_{t-1} \in \Omega$ as it is a \gls{v-GNE} at $t - 1$, we also have $\Delta_t^{\star^\top} \bar{G}_t(\bs{x}^\star_t) \leq 0$. By adding and subtracting the term $\Delta_t^{\star^\top} G(\bs{x}^\star_{t-1})$, we hence obtain	
	$$
	\Delta_t^{\star^\top} G(\bs{x}^\star_{t-1}) \leq \Delta_t^{\star^\top} (G(\bs{x}^\star_{t-1}) - (G(\bs{x}^\star_t) + U_t(\bs{x}^\star_t))).
	$$
	In addition, by combining the definition of the incentives in \eqref{eq:pers_feedback} with the perfect reconstruction of the pseudo-gradients, we also have that $U_t(\bs{x}^\star_t) = c_t (\Delta_t^\star - \xi_t G(\bs{x}^\star_{t-1}))$.
	Then, in view of the symmetry postulated in Standing Assumption~\ref{standing:symmetry}, it follows that 
	$$
	\begin{aligned}
		(1 - c_t \xi_t) &\Delta_t^{\star^\top} \nabla \theta(\bs{x}^\star_{t-1}) \\
		&\leq \Delta_t^{\star^\top} ( \nabla \theta(\bs{x}^\star_{t-1}) + c_t \bs{x}^\star_{t-1}  - \nabla \theta(\bs{x}^\star_t) - c_t \bs{x}^\star_{t}).
	\end{aligned}
	$$
	We now define $\alpha_t \coloneqq 1 - c_t \xi_t$ for all $t \in \N$, and introduce the auxiliary function $\psi_t(\bs{x}) \coloneqq \theta(\bs{x}) + \tfrac{c_t}{2} \| \bs{x} \|^2$, which is known to be $\ell$-strongly convex in case $c_t \geq 2 \ell$ (Proposition~\ref{prop:dis_fun_prop}). For this reason, we have $\Delta_t^{\star^\top} ( -\nabla \psi_t(\bs{x}^\star_{t-1}) + \nabla \psi_t(\bs{x}^\star_t)) \geq \ell \| \Delta_t^\star \|^2$, and hence
	$$
	\alpha_t \Delta_t^{\star^\top} \nabla \theta(\bs{x}^\star_{t-1}) \! \leq \! \Delta_t^{\star^\top} ( \nabla \psi_t(\bs{x}^\star_{t-1}) - \nabla \psi_t(\bs{x}^\star_t)) \! \leq \! -\ell \| \Delta_t^\star \|^2.
	$$
	By imposing $\alpha_t > 0$, yielding to $\xi_t < 1/c_t \leq 1/2\ell$, for all $t \in \N$, we finally obtain $\Delta^{\star^\top}_t \nabla \theta(\bs{x}^\star_{t-1}) \leq - (\ell/\alpha_t) \| \Delta_t^\star \|^2 < 0$.
	This result can be then combined with the descent lemma \cite[Prop.~A.24]{bertsekas1997nonlinear} to claim that the sequence $(\bs{x}^\star_t)_{t \in \N}$ satisfies
	\begin{equation}\label{eq:descent_dir}
		\begin{aligned}
			\theta(\bs{x}^\star_t) &\leq \theta(\bs{x}^\star_{t - 1}) + (\Delta_t^\star)^\top \nabla \theta(\bs{x}^\star_{t - 1}) + \tfrac{\ell}{2} \|\Delta_t^\star\|^2\\
			&\leq \theta(\bs{x}^\star_{t - 1}) - \ell \tfrac{2 - \alpha_t}{2 \alpha_t} \|\Delta_t^\star\|^2.
		\end{aligned}
	\end{equation}
	Thus, by imposing $(2 - \alpha_t)/2 \alpha_t > 0$, which implies that $0 \leq \xi_t < 1/c_t \leq 1/2\ell$, 
	it must happen that the sequence $(\theta(\bs{x}^\star_t))_{t \in \N}$ tends to a finite value, as $\theta(\bs{x}^\star_t) \to -\infty$ can not happen in view of the boundedness of $\Omega$. Since $\theta$ is continuous, the convergence of $(\theta(\bs{x}^\star_t))_{t \in \N}$ implies that $\mathrm{lim}_{t \to \infty} \, \|\Delta_t^\star\| = 0$, and hence the bounded sequence of feasible points $(\bs{x}^\star_t)_{t \in \N} \in \Omega$ (since any $\bs{x}^\star_t$ is a \gls{v-GNE} of $\overline{\Gamma}$) has a limit point in $\Theta^{\textrm{s}}$ in view of of Lemma~\ref{lemma:stationary}.
\end{proof}

 From \eqref{eq:descent_dir} it is clear that making the product $c_t\xi_t$ close to one allows us to actually boost the convergence of Algorithm~\ref{alg:two_layer} to some point in $\Theta^{\textrm{s}}$. This indeed supports the choice for a positive sign in the gradient step of \eqref{eq:pers_feedback}.
In view of the presence of noise in the agents' feedback $\{p_{i,t-1}\}_{i \in \mc{I}}$, however, at least at the beginning of the iterative scheme in Algorithm~\ref{alg:two_layer} it seems unlikely that the learning procedure $\mathscr{L}$ guarantees a perfect reconstruction of $(\nabla_{x_i} \hat{J}_{i, {t-1}}(\bs{x}^\star_{t-1}))_{i \in \mc{I}}$. For this reason, we investigate next the effect of the inexact reconstruction on the convergence properties of Algorithm~\ref{alg:two_layer}.

\subsection{Inexact estimate of the pseudo-gradients}\label{sec:stat_case_2}
In this section we leverage a further assumption characterizing the reconstruction of the pseudo-gradient mappings. Inspired by \cite{dixit2019online,ospina2020personalized}, however, we make the following assumption on $\hat{G}_{t}(\cdot)$ directly, rather than on each gradient.
\begin{assumption}\label{ass:recons_error}
	For all $t \in \N$ and $\bs{x} \in \mc{X}$, $\hat{G}_t(\bs{x}) \coloneqq Q \bs{x} + q + \epsilon_t$, and, for any $\delta \in (0,1]$, there exists $\bar{t} < \infty$ such that $\prob\{\|\epsilon_{t}\| \leq \mathrm{e}(\bar{t}) \mid \forall t \geq \bar{t}\} \geq 1-\delta$, for some nonincreasing function $\mathrm{e} : \N \to \R_{\geq 0}$ such that $\mathrm{e}(t) < \infty$, for all $t \in \N$.
	\QEDB
\end{assumption}

With Assumption~\ref{ass:recons_error} we essentially require that, for all $\bs{x} \in \mc{X}$, the reconstruction error on $Q \bs{x} + q$ made by the learning procedure $\mathscr{L}$ is bounded with (possibly high) probability $1-\delta$ by some function of the available agents' feedback. Then, by introducing quantities $\kappa_t \coloneqq (1-\alpha_t)/2\alpha_t$ and $\beta_t \coloneqq \ell (2 - \alpha_t)/2 \alpha_t$, we can prove the following result.	
\begin{lemma}\label{lemma:desc_perfect_recon}
	Let $c_t \geq 2 \ell$ and $\xi_t \in [0, 1/c_t)$ for all $t \in \N$, and let Assumption~\ref{ass:recons_error} be valid for some $\delta \in (0,1]$. Then, the sequence of \gls{v-GNE} $(\bs{x}^\star_t)_{t \in \N}$ generated by Algorithm~\ref{alg:two_layer} with personalized incentives in \eqref{eq:pers_feedback} satisfies, with probability $1 - \delta$,
	\begin{equation}\label{eq:descent_ball}
		\begin{aligned}
			\Delta^{\star^\top}_t \nabla \theta(\bs{x}^\star_{t-1}) \leq& \tfrac{\alpha_t \kappa^2_t}{\ell}  \mathrm{e}^2(t-1) \\
			&\hfill- \left(\sqrt{\tfrac{\ell}{\alpha_t}} \, \| \Delta_t^\star \| - \kappa_t \sqrt{\tfrac{\alpha_t}{\ell}} \, \mathrm{e}(t-1) \right)^2.
		\end{aligned}
	\end{equation}
	 \QEDB
\end{lemma}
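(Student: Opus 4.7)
The plan is to retrace the chain of inequalities in the proof of Proposition~\ref{prop:convergence_perfect}, keeping track of the extra term that appears because $\hat{G}_{t-1}(\bs{x}^\star_{t-1})$ now differs from $G(\bs{x}^\star_{t-1})$ by the error $\epsilon_{t-1}$, and then completing the square to recognise the right-hand side of \eqref{eq:descent_ball}. Throughout, the event $\|\epsilon_{t-1}\|\leq \mathrm{e}(t-1)$ is the one of probability at least $1-\delta$ granted by Assumption~\ref{ass:recons_error}, so the resulting bound will hold on that event.

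First, I would use the \gls{v-GNE} property \eqref{eq:VI} at iteration $t$, which still gives $\Delta_t^{\star^\top}\bar G_t(\bs{x}^\star_t)\leq 0$ since $\bs{x}^\star_{t-1}\in\Omega$. Add and subtract $\Delta_t^{\star^\top}G(\bs{x}^\star_{t-1})$, exactly as in Proposition~\ref{prop:convergence_perfect}. The only change is in the expression for $U_t(\bs{x}^\star_t)$: under inexact reconstruction, the definition of the incentives in \eqref{eq:pers_feedback} combined with Assumption~\ref{ass:recons_error} yields $U_t(\bs{x}^\star_t)=c_t(\Delta_t^\star-\xi_t G(\bs{x}^\star_{t-1})-\xi_t\epsilon_{t-1})$. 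Substituting and invoking Standing Assumption~\ref{standing:symmetry} together with $G=\nabla\theta$ produces the same manipulation as before but with a leftover additive term $c_t\xi_t\Delta_t^{\star^\top}\epsilon_{t-1}$ on the right.

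Next, introduce the $\ell$-strongly convex auxiliary $\psi_t(\bs{x})=\theta(\bs{x})+\tfrac{c_t}{2}\|\bs{x}\|^2$ (Proposition~\ref{prop:dis_fun_prop}, recalling $c_t\geq 2\ell$) to bound $\Delta_t^{\star^\top}(\nabla\psi_t(\bs{x}^\star_{t-1})-\nabla\psi_t(\bs{x}^\star_t))\leq -\ell\|\Delta_t^\star\|^2$, just as in the perfect case. Applying Cauchy--Schwarz to the leftover term gives $c_t\xi_t\Delta_t^{\star^\top}\epsilon_{t-1}\leq c_t\xi_t\|\Delta_t^\star\|\,\mathrm{e}(t-1)$ on the high-probability event. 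After dividing by $\alpha_t=1-c_t\xi_t>0$ (which is positive because $\xi_t<1/c_t$) and using $c_t\xi_t=1-\alpha_t=2\alpha_t\kappa_t$, this collapses to
\[
\Delta_t^{\star^\top}\nabla\theta(\bs{x}^\star_{t-1})\leq -\tfrac{\ell}{\alpha_t}\|\Delta_t^\star\|^2+2\kappa_t\|\Delta_t^\star\|\,\mathrm{e}(t-1).
\]

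Finally, I would complete the square on the right-hand side: expanding $\bigl(\sqrt{\ell/\alpha_t}\,\|\Delta_t^\star\|-\kappa_t\sqrt{\alpha_t/\ell}\,\mathrm{e}(t-1)\bigr)^2$ produces exactly $(\ell/\alpha_t)\|\Delta_t^\star\|^2-2\kappa_t\|\Delta_t^\star\|\mathrm{e}(t-1)+(\alpha_t\kappa_t^2/\ell)\mathrm{e}^2(t-1)$, so moving the square term across yields \eqref{eq:descent_ball}. There is no real obstacle here: the work is essentially bookkeeping, with the mild care that the division by $\alpha_t$ requires $c_t\xi_t<1$, a condition that is already built into the assumption $\xi_t\in[0,1/c_t)$. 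The only substantive input beyond the perfect-reconstruction proof is controlling the inner product with $\epsilon_{t-1}$, for which Cauchy--Schwarz plus Assumption~\ref{ass:recons_error} suffice.
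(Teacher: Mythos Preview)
Your proposal is correct and follows essentially the same route as the paper's own proof: retrace the variational-inequality manipulation of Proposition~\ref{prop:convergence_perfect} with the extra $c_t\xi_t\Delta_t^{\star^\top}\epsilon_{t-1}$ term coming from $\hat G_{t-1}=G+\epsilon_{t-1}$, bound that term via Cauchy--Schwarz and Assumption~\ref{ass:recons_error}, use the $\ell$-strong convexity of $\psi_t$, divide by $\alpha_t$, and complete the square. The only cosmetic difference is that the paper keeps the coefficient as $(1-\alpha_t)/\alpha_t$ before completing the square, whereas you immediately rewrite it as $2\kappa_t$; the algebra is identical.
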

\begin{proof}
	We start by making use of the same steps performed at the beginning of the proof of Proposition~\ref{prop:convergence_perfect}, which in view of Assumption~\ref{ass:recons_error} leads to
	$
	\alpha_t \Delta_t^{\star^\top} \nabla \theta(\bs{x}^\star_{t-1}) \leq \Delta_t^{\star^\top}  ( \nabla \theta(\bs{x}^\star_{t-1}) + c_t \bs{x}^\star_{t-1}  - \nabla \theta(\bs{x}^\star_t) - c_t \bs{x}^\star_{t}) + c_t \xi_t \Delta_t^{\star^\top}  \epsilon_{t-1},
	$
	for all $t \geq \bar{t}$. Therefore, if $c_t \geq 2 \ell > 0$ for all $t \in \N$, we can upper bound $\Delta_t^{\star^\top} ( \nabla \theta(\bs{x}^\star_{t-1}) + c_t \bs{x}^\star_{t-1}  - \nabla \theta(\bs{x}^\star_t) - c_t \bs{x}^\star_{t}) $ with $ -\ell \| \Delta_t^\star \|^2$ due to the $\ell$-strong convexity of the auxiliary function $\psi_t(\cdot)$. Since $\xi_t \geq 0$, $\Delta_t^{\star^\top}  \epsilon_{t-1}$ attains its maximum positive module when $\Delta_t^\star$ and $\epsilon_{t-1}$ are aligned, and hence we obtain the following chain of inequalities
	$$
	\begin{aligned}
		\alpha_t \Delta_t^{\star^\top} \nabla \theta(\bs{x}^\star_{t-1}) &\leq  -\ell \| \Delta_t^\star \|^2 + c_t \xi_t \|\Delta_t^\star\|\| \epsilon_{t-1}\|\\
		&\leq -\ell \| \Delta_t^\star \|^2 + c_t \xi_t \|\Delta_t^\star\| \mathrm{e}(t-1).
	\end{aligned}
	$$
	Note that the last inequality holds with probability $1 - \delta$, as it follows directly from Assumption~\ref{ass:recons_error}, for any $t \geq \bar{t}$.
	Thus, for $\alpha_t > 0$, i.e., $\xi_t < 1/c_t$, and $c_t \xi_t = 1 - \alpha_t$, we obtain
	\begin{equation}\label{eq:descent}
		\Delta_t^{\star^\top} \nabla \theta(\bs{x}^\star_{t-1}) \!\leq\!  - \tfrac{\ell}{\alpha_t} \| \Delta_t^\star \|^2 + \tfrac{1-\alpha_t}{\alpha_t} \| \Delta_t^\star \| \mathrm{e}(t-1).
	\end{equation}
	Finally, completing the square in the RHS of \eqref{eq:descent} by adding and subtracting $((1-\alpha_t)^2/4\alpha_t\ell) \mathrm{e}^2(t-1)$  leads to
	$$
	\begin{aligned}
		\Delta_t^{\star^\top} \nabla \theta(\bs{x}^\star_{t-1}) \leq &-\!\left(\sqrt{\tfrac{\ell}{\alpha_t}} \, \| \Delta_t^\star \| \!-\! (1-\tfrac{\alpha_t}{2 \alpha_t}) \sqrt{\tfrac{\alpha_t}{\ell}} \, \mathrm{e}(t-1) \right)^2\\
		&\hspace{3cm} + \tfrac{(1-\alpha_t)^2}{4\alpha_t\ell}  \mathrm{e}^2(t-1).
	\end{aligned}
	$$
	The proof ends by substituting $\kappa_t$ in the inequality above.
\end{proof}

From the relation in \eqref{eq:descent_ball}, we note that the vector $\Delta_t^\star$ does not necessarily point a descent direction for the function $\theta$.
The term $\mathrm{e}^2(t-1)$, in fact, prevents the LHS in \eqref{eq:descent_ball} from being strictly negative, though it can be made small through $\kappa_t$ by means of a suitable choice of the step-size $\xi_t$. By introducing the quantities $\bar{\beta} \coloneqq \sum_{t \in \mc{T}} \ \beta_t$ and $\underline{\beta} \coloneqq \textrm{min}_{t \in \mc{T}} \ \beta_t$ over some horizon $\mc{T}$ of length $T \geq 0$, the following result characterizes the sequence of \gls{v-GNE} generated by Algorithm~\ref{alg:two_layer}.	
\begin{theorem}\label{th:conv_recons}
	Let $c_t\geq 2 \ell$ and $\xi_t \in [0, 1/c_t)$, for all $t \in \N$, and let Assumption~\ref{ass:recons_error} be valid for some $\delta \in (0,1]$. Let some $T \in \N$ be fixed, $\mc{T} \coloneqq \{\bar{t} + 1, \ldots, T + \bar{t}\}$ and, for any global minimizer $\bs{x}^\star \in \Theta$, $\Delta_{\bar{t}} \coloneqq \theta(\bs{x}^\star_{\bar{t}}) - \theta(\bs{x}^\star)$. The sequence of \gls{v-GNE} $(\bs{x}^\star_t)_{t \in \mc{T}}$ generated by Algorithm~\ref{alg:two_layer} with the personalized incentives in \eqref{eq:pers_feedback} satisfies, with probability~$1-\delta$,
	\begin{equation}\label{eq:seq_recons}
		\begin{aligned}
			\tfrac{1}{T} \sum_{t\in \mc{T}} \|	\Delta_t^\star	\| &\leq \tfrac{1}{T \underline{\beta}} \sqrt{\sum_{t \in \mc{T}} \left( \beta_t \Delta_{\bar{t}} + \tfrac{\bar{\beta} \kappa^2_t}{\beta_t} \mathrm{e}^2(t) \right)}\\ 
			&\hspace{3cm}+ \tfrac{1}{T \underline{\beta}} \sum_{t \in \mc{T}}  \kappa_t \mathrm{e}(t).
		\end{aligned}
	\end{equation}
	\QEDB
\end{theorem}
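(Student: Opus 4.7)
The plan is to upgrade the per-step inequality provided by Lemma~\ref{lemma:desc_perfect_recon} into a telescoping descent estimate for the potential $\theta$, sum it across the horizon $\mc{T}$, and finally convert the resulting quadratic bound on the iterates $\|\Delta_t^\star\|$ into the linear bound \eqref{eq:seq_recons} by a ``complete-the-square'' decomposition followed by Cauchy--Schwarz.

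First, I would expand the squared term on the RHS of \eqref{eq:descent_ball}. The cross product in the expansion cancels the $(\alpha_t\kappa_t^2/\ell)\mathrm{e}^2(t-1)$ contribution and leaves the simpler estimate $\Delta_t^{\star\top}\nabla\theta(\bs{x}^\star_{t-1}) \le -(\ell/\alpha_t)\|\Delta_t^\star\|^2 + 2\kappa_t \|\Delta_t^\star\|\mathrm{e}(t-1)$, which holds with probability $1-\delta$ thanks to Assumption~\ref{ass:recons_error}. Feeding this into the standard descent lemma \cite[Prop.~A.24]{bertsekas1997nonlinear}, exactly as in the proof of Proposition~\ref{prop:convergence_perfect}, produces the one-step inequality $\theta(\bs{x}^\star_t) - \theta(\bs{x}^\star_{t-1}) \le -\beta_t \|\Delta_t^\star\|^2 + 2\kappa_t\|\Delta_t^\star\|\mathrm{e}(t-1)$. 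Summing telescopically for $t\in\mc{T}$ and bounding $\theta(\bs{x}^\star_{T+\bar t}) \ge \theta(\bs{x}^\star)$ yields
\[
\sum_{t\in\mc{T}} \beta_t\|\Delta_t^\star\|^2 \;\le\; \Delta_{\bar{t}} + 2\sum_{t\in\mc{T}} \kappa_t \|\Delta_t^\star\|\mathrm{e}(t-1). \qquad (\star)
\]

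Next, I would introduce the auxiliary centres $c_t \coloneqq \kappa_t\mathrm{e}(t-1)/\beta_t$ and observe that $\sum_t \beta_t(\|\Delta_t^\star\| - c_t)^2 = \sum_t \beta_t\|\Delta_t^\star\|^2 - 2\sum_t \kappa_t\|\Delta_t^\star\|\mathrm{e}(t-1) + \sum_t \kappa_t^2\mathrm{e}^2(t-1)/\beta_t$. Combining with $(\star)$ yields the clean bound $\sum_t \beta_t(\|\Delta_t^\star\| - c_t)^2 \le \Delta_{\bar{t}} + \sum_t \kappa_t^2\mathrm{e}^2(t-1)/\beta_t$. I then split $\|\Delta_t^\star\| \le \lvert \|\Delta_t^\star\| - c_t\rvert + c_t$ via the triangle inequality and sum: for the first piece, Cauchy--Schwarz gives $\sum_t \lvert\|\Delta_t^\star\| - c_t\rvert \le \sqrt{\sum_t 1/\beta_t}\,\sqrt{\sum_t \beta_t(\|\Delta_t^\star\|-c_t)^2}$, and the elementary bound $1/\beta_t \le \beta_t/\underline{\beta}^2$ (valid since $\beta_t \ge \underline{\beta}$) yields $\sqrt{\sum_t 1/\beta_t} \le \sqrt{\bar{\beta}}/\underline{\beta}$; for the second piece, $\sum_t c_t \le (1/\underline{\beta})\sum_t \kappa_t\mathrm{e}(t-1)$. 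Dividing by $T$ and using the factorization $\bar\beta(\Delta_{\bar{t}} + \sum_t \kappa_t^2\mathrm{e}^2(t-1)/\beta_t) = \sum_t(\beta_t\Delta_{\bar{t}} + \bar\beta\kappa_t^2\mathrm{e}^2(t-1)/\beta_t)$ reproduces \eqref{eq:seq_recons} (modulo a trivial relabelling $\mathrm{e}(t-1)\mapsto \mathrm{e}(t)$ that amounts to shifting the horizon $\mc{T}$).

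The main obstacle is guessing the correct auxiliary centre $c_t$ which simultaneously annihilates the cross term in $(\star)$ and leaves a non-negative residual amenable to Cauchy--Schwarz; after that, the remaining steps are algebraic bookkeeping. A minor subtlety is that the descent lemma applied to $\theta$ is valid only through the $\ell$-weak convexity of $\theta$ (Proposition~\ref{prop:dis_fun_prop}), which requires the coefficient management yielding the $\beta_t = \ell(2-\alpha_t)/(2\alpha_t)$ factor. The probability $1-\delta$ is inherited directly from Lemma~\ref{lemma:desc_perfect_recon}, since no further random estimates enter the argument.
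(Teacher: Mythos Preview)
Your proof is correct and follows essentially the same route as the paper: combine the inexact descent estimate with the descent lemma to obtain the per-step bound, complete the square to arrive at $\sum_{t\in\mc T}\beta_t(\|\Delta_t^\star\|-\kappa_t\mathrm{e}/\beta_t)^2 \le \Delta_{\bar t}+\sum_{t\in\mc T}\kappa_t^2\mathrm{e}^2/\beta_t$, and then linearize. The only cosmetic difference is in the final step: the paper applies Jensen's inequality to the $\beta_t$-weighted mean of $\|\Delta_t^\star\|-\kappa_t\mathrm{e}/\beta_t$ and afterwards uses $\sum_t\beta_t\|\Delta_t^\star\|\ge\underline\beta\sum_t\|\Delta_t^\star\|$, whereas you split via the triangle inequality and apply Cauchy--Schwarz with the auxiliary bound $1/\beta_t\le\beta_t/\underline\beta^2$; both manipulations are equivalent here and yield the identical constant. (As a small stylistic point, your auxiliary symbol $c_t$ clashes with the gain $c_t$ already used in the paper.)
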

\begin{proof}
	From Assumption~\ref{ass:recons_error}, by combining the descent lemma \cite[Prop.~A.24]{bertsekas1997nonlinear} and the relation in \eqref{eq:descent} we obtain
	$$
	\theta(\bs{x}^\star_t) \leq \theta(\bs{x}^\star_{t - 1})   - \ell \tfrac{2 - \alpha_t}{2 \alpha_t} \|\Delta_t^\star\|^2 + \tfrac{1 - \alpha_t}{\alpha_t} \|\Delta_t^\star\| \mathrm{e}(t-1)
	$$	
	with probability $1 - \delta$, for all $t \geq \bar{t}$.
	Then, by exploiting the definition of $\beta_t$, which is strictly greater than zero if $\xi_t \in [0, 1/c_t)$, and by focusing on $\ell (2 - \alpha_t)/2 \alpha_t \|\Delta_t^\star\|^2$ and $(1 - \alpha_t)/\alpha_t \|\Delta_t^\star\| \mathrm{e}(t-1)$, we can complete the square by adding and subtracting $((1-\alpha_t)^2/4\alpha^2_t \beta_t) \mathrm{e}^2(t-1)$.
	With standard manipulations one obtains the following expression
	\begin{equation}\label{eq:sample_complexity_structure}
		\begin{aligned}
			\theta(\bs{x}^\star_t) &\leq \theta(\bs{x}^\star_{t - 1}) -\beta_t \left(	\|\Delta_t^\star\| - \tfrac{1-\alpha_t}{2 \alpha_t \beta_t} \mathrm{e}(t-1) \right)^2\\
			&\hspace{4cm}+ \tfrac{(1-\alpha_t)^2}{4 \alpha^2_t \beta_t} \mathrm{e}^2(t-1).
		\end{aligned}
	\end{equation}
	We now fix $T \in \N$ and $\kappa_t = (1-\alpha_t)/2 \alpha_t$, and for any $\bs{x}^\star \in \Theta$, by summing up the inequality above over $t \in \mc{T} \coloneqq \{\bar{t} + 1, \ldots, T + \bar{t}\}$, with probability $1-\delta$ we obtain
	$
	\theta(\bs{x}^\star) \leq \theta(\bs{x}^\star_T) \leq \theta(\bs{x}^\star_{\bar{t}}) - \sum_{t \in \mc{T}} \beta_t (	\|\Delta_t^\star\| -  \tfrac{\kappa_t}{\beta_t} \mathrm{e}(t) )^2 + \sum_{t \in \mc{T}} \tfrac{\kappa^2_t}{\beta_t} \mathrm{e}^2(t).
	$
	Then, by moving $\theta(\bs{x}^\star)$ to the RHS, the term with $\beta_t$ to the LHS, and by introducing $\Delta_{\bar{t}} \coloneqq \theta(\bs{x}^\star_{\bar{t}}) - \theta(\bs{x}^\star)$, a quantity that is always nonnegative, we obtain:
	\begin{equation}\label{eq:1}
		\sum_{t \in \mc{T}} \beta_t \left(	\|\Delta_t^\star\| - \tfrac{\kappa_t}{\beta_t} \mathrm{e}(t) \right)^2 \leq \Delta_{\bar{t}}  + \sum_{t \in \mc{T}} \tfrac{\kappa^2_t}{\beta_t} \mathrm{e}^2(t).
	\end{equation}
	Since $\beta_t >0$ for all $t \in \mc{T}$, the expression in \eqref{eq:1} is amenable to apply the Jensen's inequality \cite[Th.~3.4]{rockafellar1970convex} on the convex function $(\cdot)^2$ to lower bound the summation in the LHS. 
	Before doing that, we normalize both sides by multiplying and dividing by $\bar{\beta} \coloneqq \sum_{t \in \mc{T}} \beta_t$, and we define $\hat{\beta}_t \coloneqq \beta_t/ \bar{\beta}$, thus obtaining 
	$
	\bar{\beta} (\sum_{t \in \mc{T}} \hat{\beta}_t (	\|\Delta_t^\star\| - \tfrac{\kappa_t}{\beta_t} \mathrm{e}(t) )^2 ) \geq
	\bar{\beta} (\sum_{t \in \mc{T}} \hat{\beta}_t	(\|\Delta_t^\star\| - \tfrac{\kappa_t}{\beta_t} \mathrm{e}(t) ) )^2.
	$
	Then, after replacing this latter inequality into \eqref{eq:1}, 
	with few algebraic manipulations we obtain
	$$
	\sum_{t \in \mc{T}} \beta_t \left(\|\Delta_t^\star\| - \tfrac{\kappa_t}{\beta_t} \mathrm{e}(t) \right) \leq \sqrt{\sum_{t \in \mc{T}} \left(\beta_t \Delta_{\bar{t}} + \tfrac{\bar{\beta} \kappa^2_t}{\beta_t} \mathrm{e}^2(t) \right)}.
	$$
	We now bring the term $\sum_{t \in \mc{T}} \kappa_t \mathrm{e}(t)$ in the RHS, and we note that the obtained inequality is still valid  by premultiplying both sides by $1/T$, hence getting the average over the horizon of length $T$.
	Finally, we have that $(1/T) \sum_{t \in \mc{T}} \beta_t \|\Delta_t^\star\| \geq (1/T) \underline{\beta} \sum_{t \in \mc{T}} \|\Delta_t^\star\|$, with $\underline{\beta} \coloneqq \textrm{min}_{t \in \mc{T}} \beta_t$, which directly yields to the relation in \eqref{eq:seq_recons} with probability $1-\delta$.
\end{proof}

%
We can hence upper bound the average value of the residual $\|\Delta_t^\star\|$ over a certain horizon $T$ with the sum of two terms: the first one depends on the initial distance from a global minimum for the unknown potential function $\theta$, while the second one is mainly affected by the reconstruction error~$\mathrm{e}$. 

Remarkably, the terms in the RHS of \eqref{eq:seq_recons} can be made small in two ways: i) by choosing a small step-size $\xi_t$ to make $\kappa_t$ close to zero, or ii) by tuning the product $c_t \xi_t$ close to one, thus producing a large value of $\underline{\beta}$. This latter choice, however, requires an accurate trade-off in tuning the gain $c_t$ and the step-size $\xi_t$, as larger values of $\underline{\beta}$ lead to larger values of the sub-optimal constant $\Delta_{\bar{t}}$.
A possible choice to do not promote aggressive personalized actions establishes that the central coordinator may want to match the lower bound for $c_t$, while striving to counterbalance the choice for $\xi_t$ to possibly obtain a faster convergence rate for Algorithm~\ref{alg:two_layer}.

We now discuss the impact on the bound in \eqref{eq:seq_recons} of the learning strategy $\mathscr{L}$ endowing the central coordinator.
By assuming that $\beta_t$ is fixed over the outer iterations, i.e., $\beta_t = \beta$, we then know from Assumption~\ref{ass:recons_error} that there exists some $\bar{t}$ such that $\mathrm{e}(t) \leq \mathrm{e}(\bar{t})$ for all $t \geq \bar{t}$. Therefore, it follows that the bound in \eqref{eq:seq_recons} satisfies
$$
\tfrac{1}{T} \sum_{t\in \mc{T}} \|	\Delta_t^\star	\| \leq O(1/\sqrt{T}) + O(1). 
$$
As $T$ grows, it is evident that the term $O(1/\sqrt{T})$ vanishes, and $\tfrac{1}{T} \sum_{t\in \mc{T}} \|	\Delta_t^\star	\|$ is confined in a ball whose radius depends on the quantity of agents' feedback made available to perform the reconstruction in the first step of Algorithm~\ref{alg:two_layer} (specifically, on the learning strategy $\mathscr{L}$). In addition, we further stress that Assumption~\ref{ass:recons_error} is quite general and it holds true under mild conditions for \gls{ls} and \gls{gp} approaches to learning $Q \bs{x} + q$:
\begin{itemize}
	\item Note that $Q \bs{x} + q$ can be always modelled as an affine function of the learning parameters $\eta$. Thus, setting up a \gls{ls} approach to minimize the loss between the model parameters $\eta$ and the agents' feedback turns out to be a quadratic program. Due to the large-scale properties of \gls{ls}, the error term $\mathrm{e}(t)$ behaves as a normal distribution, for which Assumption~\ref{ass:recons_error}  holds true (see, e.g., \cite[Lemma~A.4]{notarnicola2020distributed}), and hence $\lim_{t\to\infty} \mathrm{e}(t) = 0$.
	\item In a kernel-based methods, suppose $Q \bs{x} + q$ is a sample path of a \gls{gp} with zero mean and a certain kernel. Due to the large-scale property of such a regressor and under standard assumptions, also in this case Assumption~\ref{ass:recons_error}  holds true (see \cite{simonetto2019personalized}) and $\lim_{t\to\infty} \mathrm{e}(t) = 0$.
\end{itemize}	
We finally remark that, since $\sum_{t \in \mc{T}} \mathrm{e}(t) = o(T)$, for the parametric/non-parametric procedures above
$
\lim_{T \to \infty} \tfrac{1}{T} \sum_{t\in \mc{T}} \|	\Delta_t^\star	\| = 0,
$
which allows us to recover the result in the perfect reconstruction case described in \S \ref{sec:stat_case_1}.

\section{Numerical simulations}
\label{sec:num_sim}

\begin{figure}[!t]
	\centering
	\includegraphics[width=\columnwidth]{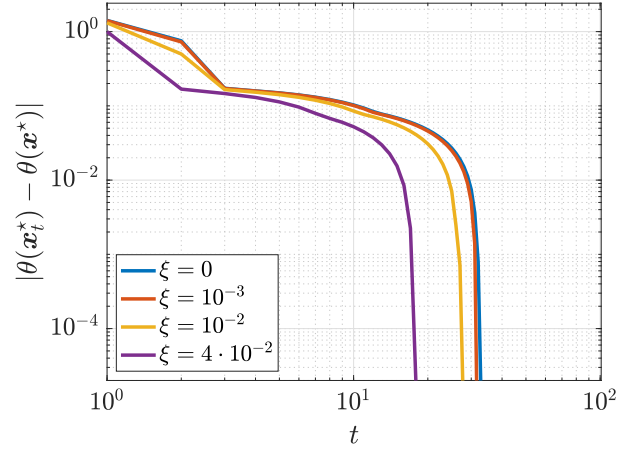}
	\caption{Convergence behaviour of Algorithm~\ref{alg:two_layer} for different values of the step-size $\xi$, with perfect reconstruction of $G(\cdot)$.}
	\label{fig:perf_recon}
\end{figure}

\begin{figure}[!t]
	\centering
	\includegraphics[width=\columnwidth]{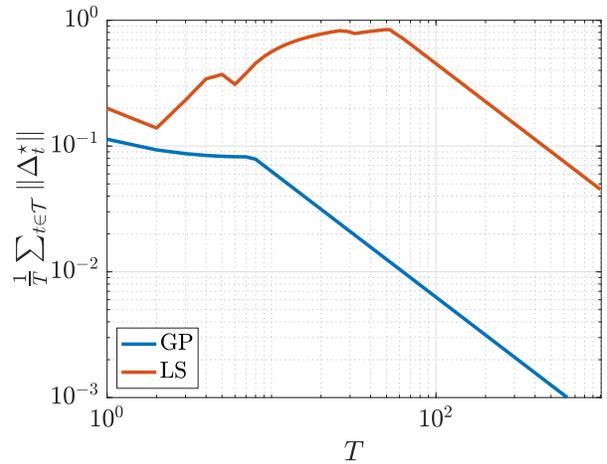}
	\caption{Average value of the residual $\|\Delta_t^\star\|$ over the time horizon $T$, for different learning strategies of the coordinator.}
	\label{fig:path_length}
\end{figure}

\begin{table}
	\caption{Simulation parameters -- Figure~\ref{fig:perf_recon}}
	\label{tab:parameters_perfect_rec}
	\centering
	\begin{tabular}{lll}
		\toprule
		Parameter  & Description   & Value \\
		\midrule
		$\ell$ & Constant of weak convexity & $10.09$\\
		$c_t$ & Personalized functional gain & $20.18$\\
		$P$ & Weight for the extragradient algorithm & $I$ \\
		$l_F$ & Horizontal scale -- \gls{gp} method  & $50$ \\
		$\sigma_F$ & Vertical scale -- \gls{gp} method  & $100$ \\
		\bottomrule
	\end{tabular}
\end{table}

\begin{figure}[!t]
	\centering
	\includegraphics[width=\columnwidth]{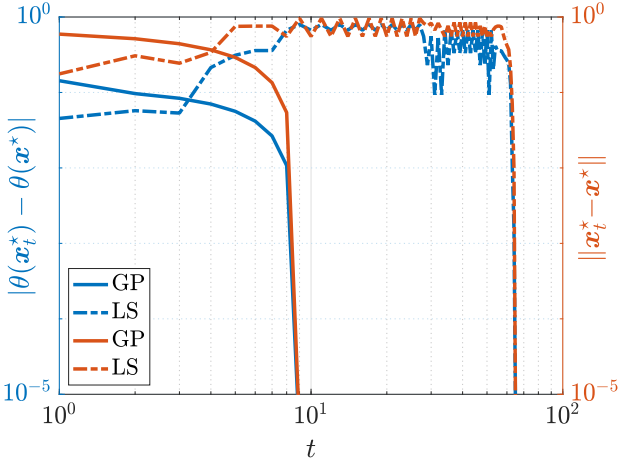}
	\caption{Sub-optimality (left y-axis) and tracking error (right y-axis) of the sequence of \gls{v-GNE} generated by Algorithm~\ref{alg:two_layer} w.r.t. $\bs{x}^\star \in \Theta$, for different learning strategies for the coordinator.}
	\label{fig:eq_error}
\end{figure}

\begin{table}
	\caption{Simulation parameters -- Figure~\ref{fig:path_length} and \ref{fig:eq_error}}
	\label{tab:parameters}
	\centering
	\begin{tabular}{lll}
		\toprule
		Parameter  & Description   & Value \\
		\midrule
		$\ell$ & Constant of weak convexity  & $1.22$\\
		$\mu$ & Mean of the agents' feedback noise $\varepsilon_{i,t}$ & $0$\\
		$\sigma^2$ & Variance of the agents' feedback noise $\varepsilon_{i,t}$ & $25$\\
		$c_t$ & Personalized functional gain  & $2.44$\\
		$\xi_t$ & Personalized functional step-size & $0.41$\\
		$P$ & Weight for the extragradient algorithm  & $I$ \\
		$l_F$ & Horizontal scale -- \gls{gp} method  & $50$ \\
		$\sigma_F$ & Vertical scale -- \gls{gp} method  & $100$ \\
		\bottomrule
	\end{tabular}
\end{table}
 
We verify our findings through numerical instances of the hypomonotone \gls{GNEP} in \eqref{eq:single_prob} with parameters summarized in Tab.~\ref{tab:parameters_perfect_rec}--\ref{tab:parameters}. Both examples involves $N = 20$ agents controlling a scalar variable $n_i = 1$ constrained to the local set $\mc{X}_i = [0,1]$.
Each matrix $A_i$ is constructed so that every pair of consecutive agents are coupled, i.e., $x_i + x_{i+1} \leq b_i$, for all $i \in \mc{I}$, while the shared resource term, $b_i$ is randomly sampled following a uniform distribution on $(0,1)$. In addition, every $Q_i$ and $C_{i,j}$ is randomly generated from a normal distribution, while every $q_i$ follows a uniform distribution on the interval $(-1, 1)$. 

Once given the personalized feedback functional at every outer iteration $t \in \N$, the \gls{v-GNE} is computed by means of the projection-type method described in \cite[Ch.~12]{facchinei2007finite}, thus partially neglecting the multi-agent nature of the problem addressed. Since the algorithm in \cite[Ch.~12]{facchinei2007finite} is tailored for monotone \glspl{VI}, we stress that its convergence is enabled by the incentives in \eqref{eq:pers_feedback}, as $Q + Q^\top$ is not positive semi-definite (i.e., $G(\cdot)$ is not monotone), while $(Q + c_t I) + (Q + c_t I)^\top$ is (i.e., $\bar{G}_t(\cdot)$ is strongly monotone, for $c_t \geq 2 \ell$ -- see Proposition~\ref{prop:strong_conv}).

In Fig.~\ref{fig:perf_recon} is illustrated the convergence boost effect, or error reduction, of the step-size $\xi$ in case of perfect and inexact reconstruction, while in Fig.~\ref{fig:path_length} we compare the behaviour of the metric $(1/T) \sum_{t \in \mc{T}} \|\Delta_t^\star\|$ when the coordinator is endowed with an \gls{ls}-based learning strategy, and a \gls{gp}-based one (covariance matrices computed by using a radial basis function kernel). Since the problem in \eqref{eq:single_prob} makes available the explicit expression of $\theta(\cdot)$ in \eqref{eq:pot_fun_hypo}, we can show the evolution of the potential function evaluated in the sequence of \gls{v-GNE} generated by Algorithm~\ref{alg:two_layer} compared to a global minimum (Fig.~\ref{fig:eq_error} -- left y-axis), as well as the distance between any computed point w.r.t. some point in $\Theta$ (Fig.~\ref{fig:eq_error} -- right y-axis).

\section{Conclusion}
The design of suitable personalized incentives is key to compute \gls{GNE} in quadratic, nonmonotone \gls{GNEP} characterized by bilateral symmetric interactions among agents. The benefit of adopting such functionals are twofold: i) they serve as regularization terms of the agents' cost functions, thus enabling for the practical computation of a \gls{v-GNE} at each outer iteration of the proposed, two-layer procedure, and ii) they provide a mean to achieve faster convergence rate. The proposed algorithm converges to a \gls{GNE} by exploiting the consistency bounds characterizing standard learning procedures for the coordinator, such as \gls{ls} or \gls{gp}.

Future research directions may include extending the proposed approach to time-varying \glspl{GNEP}, also exploring how to avoid to know the constant of weak convexity $\ell$ of the potential function, which represents a fundamental, albeit possibly unknown, parameter to establish convergence.


\balance
\bibliographystyle{IEEEtran}
\bibliography{22_ECC.bib}

\begin{thebibliography}{10}
\providecommand{\url}[1]{#1}
\csname url@samestyle\endcsname
\providecommand{\newblock}{\relax}
\providecommand{\bibinfo}[2]{#2}
\providecommand{\BIBentrySTDinterwordspacing}{\spaceskip=0pt\relax}
\providecommand{\BIBentryALTinterwordstretchfactor}{4}
\providecommand{\BIBentryALTinterwordspacing}{\spaceskip=\fontdimen2\font plus
\BIBentryALTinterwordstretchfactor\fontdimen3\font minus
  \fontdimen4\font\relax}
\providecommand{\BIBforeignlanguage}[2]{{%
\expandafter\ifx\csname l@#1\endcsname\relax
\typeout{** WARNING: IEEEtran.bst: No hyphenation pattern has been}%
\typeout{** loaded for the language `#1'. Using the pattern for}%
\typeout{** the default language instead.}%
\else
\language=\csname l@#1\endcsname
\fi
#2}}
\providecommand{\BIBdecl}{\relax}
\BIBdecl

\bibitem{zhu2011multi}
Q.~Zhu and T.~Ba{\c{s}}ar, ``A multi-resolution large population game framework
  for smart grid demand response management,'' in \emph{International
  Conference on NETwork Games, Control and Optimization (NetGCooP 2011)}.\hskip
  1em plus 0.5em minus 0.4em\relax IEEE, 2011, pp. 1--8.

\bibitem{cenedese2019charging}
C.~Cenedese, F.~Fabiani, M.~Cucuzzella, J.~M. Scherpen, M.~Cao, and
  S.~Grammatico, ``Charging plug-in electric vehicles as a mixed-integer
  aggregative game,'' in \emph{2019 IEEE 58th Conference on Decision and
  Control (CDC)}.\hskip 1em plus 0.5em minus 0.4em\relax IEEE, 2019, pp.
  4904--4909.

\bibitem{neel2002game}
J.~Neel, R.~M. Buehrer, B.~H. Reed, and R.~P. Gilles, ``Game theoretic analysis
  of a network of cognitive radios,'' in \emph{The 2002 45th Midwest Symposium
  on Circuits and Systems, 2002. MWSCAS-2002.}, vol.~3.\hskip 1em plus 0.5em
  minus 0.4em\relax IEEE, 2002, pp. III--III.

\bibitem{altman2007evolutionary}
E.~Altman, Y.~Hayel, and H.~Kameda, ``Evolutionary dynamics and potential games
  in non-cooperative routing,'' in \emph{2007 5th International Symposium on
  Modeling and Optimization in Mobile, Ad Hoc and Wireless Networks and
  Workshops}.\hskip 1em plus 0.5em minus 0.4em\relax IEEE, 2007, pp. 1--5.

\bibitem{zhang2010cooperative}
F.~Zhang and N.~E. Leonard, ``Cooperative filters and control for cooperative
  exploration,'' \emph{IEEE Transactions on Automatic Control}, vol.~55, no.~3,
  pp. 650--663, 2010.

\bibitem{fabiani2018distributed}
F.~Fabiani, D.~Fenucci, and A.~Caiti, ``A distributed passivity approach to
  {AUV} teams control in cooperating potential games,'' \emph{Ocean
  Engineering}, vol. 157, pp. 152--163, 2018.

\bibitem{la2016potential}
Q.~D. L{\~a}, Y.~H. Chew, and B.-H. Soong, \emph{Potential Game Theory}.\hskip
  1em plus 0.5em minus 0.4em\relax Springer Science \& Business Media, 2016.

\bibitem{facchinei2007generalized}
F.~Facchinei and C.~Kanzow, ``Generalized {N}ash equilibrium problems,''
  \emph{4OR}, vol.~5, no.~3, pp. 173--210, 2007.

\bibitem{heikkinen2006potential}
T.~Heikkinen, ``A potential game approach to distributed power control and
  scheduling,'' \emph{Computer Networks}, vol.~50, no.~13, pp. 2295--2311,
  2006.

\bibitem{fabiani2019multi}
F.~Fabiani and S.~Grammatico, ``Multi-vehicle automated driving as a
  generalized mixed-integer potential game,'' \emph{IEEE Transactions on
  Intelligent Transportation Systems}, vol.~21, no.~3, pp. 1064--1073, 2019.

\bibitem{li2013designing}
N.~Li and J.~R. Marden, ``Designing games for distributed optimization,''
  \emph{IEEE Journal of Selected Topics in Signal Processing}, vol.~7, no.~2,
  pp. 230--242, 2013.

\bibitem{gadjov2021exact}
D.~Gadjov and L.~Pavel, ``On the exact convergence to {N}ash equilibrium in
  hypomonotone regimes under full and partial-information,'' \emph{arXiv
  preprint arXiv:2104.11096}, 2021.

\bibitem{simonetto2019personalized}
A.~Simonetto, E.~Dall’Anese, J.~Monteil, and A.~Bernstein, ``Personalized
  optimization with user’s feedback,'' \emph{Automatica}, vol. 131, p.
  109767, 2021.

\bibitem{ospina2020personalized}
A.~M. Ospina, A.~Simonetto, and E.~Dall'Anese, ``Personalized demand response
  via shape-constrained online learning,'' in \emph{Proceedings of the IEEE
  International Conference on Communications, Control, and Computing
  Technologies for Smart Grids (SmartGridComm)}, 2020.

\bibitem{notarnicola2020distributed}
I.~Notarnicola, A.~Simonetto, F.~Farina, and G.~Notarstefano, ``Distributed
  personalized gradient tracking with convex parametric models,'' \emph{IEEE
  Transactions on Automatic Control}, 2022.

\bibitem{konnov2006regularization}
I.~V. Konnov, M.~Ali, and E.~Mazurkevich, ``Regularization of nonmonotone
  variational inequalities,'' \emph{Applied Mathematics and Optimization},
  vol.~53, no.~3, pp. 311--330, 2006.

\bibitem{yin2011nash}
H.~Yin, U.~V. Shanbhag, and P.~G. Mehta, ``Nash equilibrium problems with
  scaled congestion costs and shared constraints,'' \emph{IEEE Transactions on
  Automatic Control}, vol.~56, no.~7, pp. 1702--1708, 2011.

\bibitem{konnov2014penalty}
I.~V. Konnov, ``On penalty methods for non monotone equilibrium problems,''
  \emph{Journal of Global Optimization}, vol.~59, no.~1, pp. 131--138, 2014.

\bibitem{lucidi2020solving}
S.~Lucidi, M.~Passacantando, and F.~Rinaldi, ``Solving non-monotone equilibrium
  problems via a {DIRECT}-type approach,'' \emph{Journal of Global
  Optimization}, pp. 1--27, 2022.

\bibitem{facchinei2007finite}
F.~Facchinei and J.~S. Pang, \emph{Finite-dimensional variational inequalities
  and complementarity problems}.\hskip 1em plus 0.5em minus 0.4em\relax
  Springer Science \& Business Media, 2007.

\bibitem{facchinei2011decomposition}
F.~Facchinei, V.~Piccialli, and M.~Sciandrone, ``Decomposition algorithms for
  generalized potential games,'' \emph{Computational Optimization and
  Applications}, vol.~50, no.~2, pp. 237--262, 2011.

\bibitem{fabiani2019nash}
F.~Fabiani and A.~Caiti, ``Nash equilibrium seeking in potential games with
  double-integrator agents,'' in \emph{2019 18th European Control Conference
  (ECC)}.\hskip 1em plus 0.5em minus 0.4em\relax IEEE, 2019, pp. 548--553.

\bibitem{davis2019stochastic}
D.~Davis and D.~Drusvyatskiy, ``Stochastic model-based minimization of weakly
  convex functions,'' \emph{SIAM Journal on Optimization}, vol.~29, no.~1, pp.
  207--239, 2019.

\bibitem{salehisadaghiani2016distributed}
F.~Salehisadaghiani and L.~Pavel, ``Distributed {N}ash equilibrium seeking: {A}
  gossip-based algorithm,'' \emph{Automatica}, vol.~72, pp. 209--216, 2016.

\bibitem{ye2017distributed}
M.~Ye and G.~Hu, ``Distributed {N}ash equilibrium seeking by a consensus based
  approach,'' \emph{IEEE Transactions on Automatic Control}, vol.~62, no.~9,
  pp. 4811--4818, 2017.

\bibitem{escape}
C.~Jin, R.~Ge, P.~Netrapalli, S.~M. Kakade, and M.~I. Jordan, ``How to escape
  saddle points efficiently,'' in \emph{Proceedings of the 34th International
  Conference on Machine Learning - Volume 70}, 2017, pp. 1724--1732.

\bibitem{bertsekas1997nonlinear}
D.~P. Bertsekas, ``Nonlinear programming,'' \emph{Journal of the Operational
  Research Society}, vol.~48, no.~3, pp. 334--334, 1997.

\bibitem{dixit2019online}
R.~Dixit, A.~S. Bedi, R.~Tripathi, and K.~Rajawat, ``Online learning with
  inexact proximal online gradient descent algorithms,'' \emph{IEEE
  Transactions on Signal Processing}, vol.~67, no.~5, pp. 1338--1352, 2019.

\bibitem{rockafellar1970convex}
R.~T. Rockafellar, \emph{Convex analysis}.\hskip 1em plus 0.5em minus
  0.4em\relax Princeton university press, 1970, vol.~36.

\end{thebibliography}

\end{document}